\documentclass{article}
\usepackage{graphics}
\usepackage{graphicx}
\usepackage{latexsym}
\usepackage{amssymb}
\usepackage{amsmath}
\usepackage[normalem]{ulem}
\newtheorem{theorem}{Theorem}[section]
\newtheorem{lemma}[theorem]{Lemma}
\newtheorem{proposition}[theorem]{Proposition}
\newtheorem{corollary}[theorem]{Corollary}
\newtheorem{definition}{Definition}[section]
\newtheorem{preexample}{Example}[section]
\newenvironment{example}{\begin{preexample}}{\end{preexample}}
\newtheorem{preremark}{Remark}
\newenvironment{remark}{\begin{preremark}\rm}{\end{preremark}}

\newenvironment{proof}
  {{\bf Proof:}}
  {\qquad \hspace*{\fill} $\Box$}%

\begin{document}

\title{Algebraic Conjugacy of Linear Flows on Connected Lie Groups}
\author{S. N. Stelmastchuk\\ Universidade Federal do Paran\'{a}\\Jandaia do Sul, Brazil, simnaos@ufpr.br}

\maketitle

\begin{abstract}
  We study algebraic conjugacy of linear flows on connected Lie groups. To each linear flow we associate a derivation of the Lie algebra, and we analyze how the conjugacy problem can be transferred to the infinitesimal level. In the connected and simply connected case, algebraic conjugacy is characterized by the conjugacy of the associated derivations. For non-simply connected Lie groups, we show that this condition must be supplemented by global restrictions arising from the universal covering group. More precisely, only projectable derivations define linear flows on the quotient, and only admissible Lie algebra isomorphisms descend to conjugacies between the quotient groups. This provides a framework for the algebraic classification of linear flows by computing projectable derivations and admissible automorphisms. We also describe fixed point subgroups and kernels of derivations as obstructions to algebraic conjugacy. Finally, we apply the general results to the three-dimensional Heisenberg group and to its central quotients, showing explicitly how the quotient topology restricts both the admissible derivations and the conjugating automorphisms.
\end{abstract}

\textbf{Keywords: } linear flows; algebraic conjugacy; Lie groups.

\textbf{AMS 2010 subject classification}: 34C20, 34C40, 22E99

\section{Introduction}

The notion of conjugacy is a central tool in the qualitative theory of dynamical systems. Roughly speaking, two dynamical systems are conjugate when there exists a change of variables that transforms the trajectories of one system into the
trajectories of the other, preserving the relevant dynamical structure. In this sense, conjugacy allows one to replace a given system by a simpler one without losing its essential topological, differentiable or algebraic properties. Classical
references present several aspects of this problem; see, for instance, \cite{chicone,colonius,Robi99}.

In this paper, the ambient space is a connected Lie group $G$, and the dynamical systems under consideration are linear flows on $G$. A linear flow is the flow of a linear vector field $\mathcal X$, that is, a vector field whose flow $(\varphi_t)_{t\in\mathbb R}$ is a one-parameter group of automorphisms of $G$. Equivalently, the trajectories are solutions of
\[
  \dot g=\mathcal X(g), \qquad g\in G.
\]
A fundamental fact is that every linear vector field has an associated derivation $D\in\operatorname{Der}(\mathfrak g)$, where $\mathfrak g$ is the Lie algebra of $G$, and the differential of the flow at the identity satisfies $(d\varphi_t)_e=e^{tD}$.

A first study of conjugacy for linear systems on Lie groups was developed in \cite{DaSan1}, where the authors treated a particular
nilpotent setting. Although topological conjugacy is a natural and important notion in dynamical systems, in the present paper we restrict ourselves to a stronger class of conjugacies. We say that two linear flows are algebraically conjugate when they are conjugate by a Lie group isomorphism. This restriction is natural in Lie theory, since a Lie group isomorphism induces a Lie algebra
isomorphism( see for instance \cite{Sanmartin}). Therefore, one may try to study conjugacy of linear flows through the corresponding conjugacy problem for derivations on Lie algebras.

At first sight, this reduction to the Lie algebra may seem straightforward. However, a difficulty appears when the Lie group is connected but not simply connected. If $G$ is connected and simply connected, every derivation of $\mathfrak g$ integrates to a unique linear flow on $G$. For a general connected Lie group, one must pass to the universal covering group $\widetilde G$. A derivation $D\in\operatorname{Der}(\mathfrak g)$ always generates a linear flow $(\widetilde\varphi_t^D)_{t\in\mathbb R}$ on
$\widetilde G$, but this lifted flow does not necessarily descend to $G$. If $G\simeq \widetilde G/\Gamma_G$, then the descent condition is
\[
  \Gamma_G\subset \operatorname{Fix}(\widetilde\varphi^D).
\]
This leads to the class $\operatorname{Der}(\mathfrak g;\Gamma_G)$ of projectable derivations.

After identifying which derivations generate linear flows on the quotient, one must also determine which Lie algebra isomorphisms are compatible with the quotient structure. Suppose that $H\simeq \widetilde H/\Gamma_H$ is another connected Lie group. A Lie algebra isomorphism $A:\mathfrak g\to\mathfrak h$ integrates to an isomorphism $\widetilde A:\widetilde G\longrightarrow \widetilde H$ between the universal covering groups. In order for this isomorphism to descend to an isomorphism from $G$ onto $H$, it is necessary and sufficient that
\[
  \widetilde A(\Gamma_G)=\Gamma_H.
\]
This motivates the class $\operatorname{Iso}(\mathfrak g,\mathfrak h;\Gamma_G,\Gamma_H)$ of admissible Lie algebra isomorphisms.

The main result of this paper combines these two restrictions. We show that two linear flows on connected Lie groups are algebraically conjugate if and only if their associated derivations are conjugate by an admissible Lie algebra isomorphism. More precisely, the conjugacy criterion involves derivations $D\in\operatorname{Der}(\mathfrak g;\Gamma_G)$, $F\in\operatorname{Der}(\mathfrak h;\Gamma_H)$,
and an isomorphism $A\in\operatorname{Iso}(\mathfrak g,\mathfrak h;\Gamma_G,\Gamma_H)$ satisfying
\[
  AD=FA.
\]
Thus the algebraic classification is governed not by the full action of $\operatorname{Aut}(\mathfrak g)$ on $\operatorname{Der}(\mathfrak g)$, but by a restricted action determined by the topology of the quotient. This point of view suggests a classification program: for each connected Lie group $G=\widetilde G/\Gamma_G$, one has to compute the projectable derivations, the admissible automorphisms, and the corresponding restricted orbits.

We also discuss obstructions to algebraic conjugacy. The first obstruction comes from fixed point subgroups: if two linear flows are algebraically conjugate, then their fixed point subgroups must be isomorphic as Lie groups. The second one is infinitesimal: if $D$ and $F$ are the associated derivations, then any algebraic conjugacy induces an isomorphism satisfying
\[
  A(\ker D)=\ker F.
\]
Consequently, if $\ker D$ and $\ker F$ are not isomorphic as Lie algebras, then the corresponding linear flows cannot be algebraically conjugate.

Beyond the general theory, we study linear flows on the three-dimensional Heisenberg group. In the connected and simply connected case, every derivation of $\mathfrak h_3$ generates a linear flow, and the algebraic classification is described through the action of $\operatorname{Aut}(\mathfrak h_3)$ on $\operatorname{Der}(\mathfrak h_3)$. We then consider the non-simply connected
central quotients
\[
  H_{3,a}=\widetilde H_3/\Gamma_a,
  \qquad
  \Gamma_a=\{\exp(naZ);\ n\in\mathbb Z\}.
\]
In this quotient case, projectability imposes the condition
\[
  \operatorname{tr}(A)=0,
\]
and admissibility restricts the automorphisms to those whose horizontal block satisfies $\det P=\pm1$. This shows that the algebraic classification on $H_{3,a}$ is genuinely different from the simply connected case. Thus the Heisenberg case works as a model example for applying the general criterion to other classes of Lie groups.

The paper is organized as follows. In Section~2 we recall the basic facts on linear vector fields, linear flows and derivations. Section~3 establishes the criterion for algebraic conjugacy of linear flows on connected Lie groups, with emphasis on the role of universal coverings and admissible isomorphisms. In Section~4 we study obstructions to algebraic conjugacy arising from fixed point
subgroups and kernels of derivations. Finally, Section~5 applies the previous results to the Heisenberg group and to its central quotients.

\section{Linear Flows and Derivations}

In this section we recall the basic notions concerning linear vector fields, linear flows and their associated derivations. Our aim is to fix the notation and to make explicit the link between the dynamics on a Lie group and the induced linear dynamics on its Lie algebra. We follow the standard terminology used in the theory of linear control systems on Lie groups; see, for instance,
\cite{jouanI,jouanII}.

Let $G$ be a connected Lie group with identity element $e$ and Lie algebra $\mathfrak g=T_eG$. A smooth vector field $\mathcal X$ on $G$ determines, whenever it is complete, a global flow
\[
  (\varphi_t)_{t\in\mathbb R},
  \qquad
  \varphi_t:G\longrightarrow G,
\]
where $\varphi_t(g)$ is the solution at time $t$ of the differential equation defined by $\mathcal X$ with initial condition $g$. Thus
\[
  \varphi_0=\operatorname{id}_G, \qquad \varphi_{t+s}=\varphi_t\circ\varphi_s,
\]
for all $t,s\in\mathbb R$.

The flow of an arbitrary vector field is a one-parameter group of diffeomorphisms. A linear vector field is a vector field whose flow is more rigid: each time map is required to be a Lie group automorphism.

\begin{definition}
  A complete vector field $\mathcal X$ on a connected Lie group $G$ is called \emph{linear} if its flow $(\varphi_t)_{t\in\mathbb R}$ satisfies $\varphi_t\in \operatorname{Aut}(G)$ for every $t\in\mathbb R$. Equivalently,
  \[
    \varphi_t(gh)=\varphi_t(g)\varphi_t(h), \qquad g,h\in G,\ t\in\mathbb R.
  \]
  In this case, $(\varphi_t)_{t\in\mathbb R}$ is called a \emph{linear flow} on $G$.
\end{definition}
Since every Lie group automorphism fixes the identity element, a linear flow  satisfies $\varphi_t(e)=e$ for every $t\in\mathbb R$. Hence the corresponding linear vector field satisfies $\mathcal X(e)=0$. Thus linear vector fields should not be confused with left-invariant or right-invariant vector fields. In general, a nonzero invariant vector field does  not vanish at the identity.

This definition should be viewed as the natural analogue, on a Lie group, of a  linear differential equation on a vector space. Indeed, if $G=\mathbb R^n$  with its additive group structure, then $\operatorname{Aut}(G)=GL(n,\mathbb R)$. A linear flow on $G$ is therefore a one-parameter subgroup of $GL(n,\mathbb R)$, and hence has the form
\[
  \varphi_t(x)=e^{tA}x, \qquad x\in\mathbb R^n,
\]
for some matrix $A\in\mathfrak{gl}(n,\mathbb R)$. Thus the usual linear system
\[
  \dot x=Ax
\]
is recovered as a particular case.

For a general Lie group, the corresponding infinitesimal object is not a  matrix, but a derivation of the Lie algebra.

\begin{definition}
  A linear map $D:\mathfrak g\longrightarrow\mathfrak g$ is called a  \emph{derivation} of $\mathfrak g$ if
  \[
    D[X,Y]=[DX,Y]+[X,DY],\qquad X,Y\in\mathfrak g.
  \]
  The vector space of all derivations of $\mathfrak g$ is denoted by  $\operatorname{Der}(\mathfrak g)$.
\end{definition}

Let $\mathcal X$ be a linear vector field on $G$, with flow $(\varphi_t)_{t\in\mathbb R}$. Since each $\varphi_t$ is a Lie group
automorphism, its differential at the identity,
\[
  (d\varphi_t)_e:\mathfrak g\longrightarrow\mathfrak g,
\]
is a Lie algebra automorphism. Therefore
\[
  t\longmapsto (d\varphi_t)_e
\]
is a one-parameter group of Lie algebra automorphisms of $\mathfrak g$. Its infinitesimal generator is a derivation $D\in\operatorname{Der}(\mathfrak g)$,  and it is characterized by
\[
  (d\varphi_t)_e=e^{tD}, \qquad t\in\mathbb R
\]
We call $D$ the derivation associated with the linear flow  $(\varphi_t)_{t\in\mathbb R}$.

Equivalently, if $Y$ is a right-invariant vector field on $G$, then the associated derivation is given by
\[
  DY=-[\mathcal X,Y],
\]
where the bracket is the usual Lie bracket of vector fields. Through the natural  identification between right-invariant vector fields and elements of  $\mathfrak g$, this formula defines an element of $\operatorname{Der}(\mathfrak g)$.

The equality
\[
  (d\varphi_t)_e=e^{tD}
\]
shows that the linear flow $(\varphi_t)_{t\in\mathbb R}$ induces a linear flow on the tangent space at the identity, that is, on the Lie algebra $\mathfrak g=T_eG$. Indeed, given $X_0\in\mathfrak g$, define
\[
  X(t):=(d\varphi_t)_eX_0.
\]
Then $X(t)=e^{tD}X_0$. Consequently, $\dot X(t)=DX(t)$, and $X(t)$ is the solution of the linear differential equation
\[
  \dot X=DX,\qquad X(0)=X_0.
\]
Thus the derivation $D$ describes the infinitesimal dynamics induced by the linear flow on the Lie algebra $\mathfrak g$. This does not mean that the global dynamics on $G$ is completely determined without further hypotheses; it means that the differential of the flow at the identity is governed by the linear system generated by $D$.

There is, however, an important distinction between the simply connected and  the non-simply connected cases. If $G$ is connected and simply connected, then  every derivation $D\in\operatorname{Der}(\mathfrak g)$ integrates to a unique one-parameter group of automorphisms of $G$. Hence, in this case, every derivation defines a unique linear flow on $G$.

For a connected Lie group which is not simply connected, this statement is no longer automatic. A derivation always defines a linear flow on the universal  covering group $\widetilde G$, but this lifted flow may fail to descend to the
quotient group $G$. Thus, in the non-simply connected case, one must impose an  additional compatibility condition with the discrete subgroup defining the  quotient. This global obstruction will be discussed in the next section.

We shall use the following notation throughout the paper. If  $(\varphi_t)_{t\in\mathbb R}$ is a linear flow on $G$, we denote by
$D\in\operatorname{Der}(\mathfrak g)$ its associated derivation. Conversely,  when the Lie group under consideration is connected and simply connected, or  when the relevant projectability condition is satisfied, we denote by
\[
  (\varphi_t^D)_{t\in\mathbb R}
\]
the linear flow generated by the derivation $D$.

%---------------
%---------------

\section{Algebraic Conjugacy and Descent from Universal Coverings}
\label{sec:algebraic_conjugacy_descent}

In this section we study algebraic conjugacy of linear flows on connected Lie groups. The main point is that the infinitesimal conjugacy condition on the Lie algebras gives, in the simply connected setting, a conjugacy between the lifted flows on the universal covering groups. However, this condition does not automatically give a conjugacy on the original Lie groups. In order to descend the conjugacy, one must impose a compatibility condition with the discrete central subgroups defining the quotients.

Throughout this section, $G$ and $H$ denote connected Lie groups with Lie algebras $\mathfrak g$ and $\mathfrak h$, respectively. Let $p_G:\widetilde G\longrightarrow G$, $p_H:\widetilde H\longrightarrow H$ be their universal covering homomorphisms. We denote $\Gamma_G=\ker p_G$, $\Gamma_H=\ker p_H$. Then $\Gamma_G\subset Z(\widetilde G)$ and $\Gamma_H\subset Z(\widetilde H)$ are discrete central subgroups, and
\[
  G\simeq \widetilde G/\Gamma_G,\qquad  H\simeq \widetilde H/\Gamma_H.
\]
Let $(\varphi_t)_{t\in\mathbb R}$ and $(\psi_t)_{t\in\mathbb R}$ be linear flows on $G$ and $H$, respectively, and $D\in\operatorname{Der}(\mathfrak g)$,  $F\in\operatorname{Der}(\mathfrak h)$ be their associated derivations.

In this paper we restrict ourselves to conjugacies given by Lie group isomorphisms. This is stronger than topological conjugacy and will be called algebraic conjugacy.

\begin{definition}
  The linear flows $(\varphi_t)_{t\in\mathbb R}$ and  $(\psi_t)_{t\in\mathbb R}$ are said to be algebraically conjugate if there  exists a Lie group isomorphism $\Phi:G\longrightarrow H$ such that
  \[
    \Phi\circ\varphi_t=\psi_t\circ\Phi
  \]
  for every $t\in\mathbb R$.
\end{definition}

We first recall the simply connected case. In this situation there is no
topological obstruction to the passage from Lie algebras to Lie groups.

\begin{proposition} \label{prop:sc_conjugacy_derivations}
  Let $G$ and $H$ be connected and simply connected Lie groups with Lie algebras $\mathfrak g$ and $\mathfrak h$, respectively. Let $(\varphi_t)_{t\in\mathbb R}$ and $(\psi_t)_{t\in\mathbb R}$ be linear flows on $G$ and $H$, with associated  derivations $D\in\operatorname{Der}(\mathfrak g)$, $F\in\operatorname{Der}(\mathfrak h)$. Then $(\varphi_t)_{t\in\mathbb R}$ and $(\psi_t)_{t\in\mathbb R}$ are algebraically conjugate if and only if there exists a Lie algebra isomorphism $A:\mathfrak g\longrightarrow\mathfrak h$ such that
  \[
    AD=FA.
  \]
\end{proposition}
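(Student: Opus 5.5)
We prove the two implications separately, using two standard facts. First, a Lie group homomorphism between connected groups is determined by its differential at the identity. Second, on a simply connected group every Lie algebra homomorphism integrates to a unique Lie group homomorphism.

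\emph{Necessity.} Suppose $\Phi:G\to H$ is a Lie group isomorphism with $\Phi\circ\varphi_t=\psi_t\circ\Phi$ for all $t$. Set $A=(d\Phi)_e:\mathfrak g\to\mathfrak h$; it is a Lie algebra isomorphism, since $\Phi^{-1}$ is also a Lie group homomorphism. Both $\varphi_t$ and $\Phi$ fix the identity, so the chain rule at $e$ gives
\[
  A\,e^{tD}=e^{tF}A,\qquad t\in\mathbb R .
\]
Differentiating at $t=0$ yields $AD=FA$. Simple connectedness is not needed for this direction.

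\emph{Sufficiency.} Suppose $A$ is a Lie algebra isomorphism with $AD=FA$. Since $G$ is simply connected, $A$ integrates to a unique Lie group homomorphism $\Phi:G\to H$ with $(d\Phi)_e=A$. Since $H$ is simply connected, $A^{-1}$ integrates to a homomorphism $\Psi:H\to G$. The compositions $\Psi\circ\Phi$ and $\Phi\circ\Psi$ have identity differentials, so by uniqueness they equal the identity maps. Hence $\Phi$ is an isomorphism.

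It remains to check the conjugacy relation. Fix $t$ and compare the two Lie group homomorphisms $\Phi\circ\varphi_t$ and $\psi_t\circ\Phi$ from $G$ to $H$. Their differentials at $e$ are $Ae^{tD}$ and $e^{tF}A$. From $AD=FA$ we get $AD^k=F^kA$ for all $k$, and summing the exponential series gives $Ae^{tD}=e^{tF}A$. The two homomorphisms therefore have the same differential. Since $G$ is connected, and hence generated by $\exp(\mathfrak g)$, a homomorphism is determined by its differential, so $\Phi\circ\varphi_t=\psi_t\circ\Phi$.

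The only step that needs care is the sufficiency direction. There one must invoke the integration theorem for homomorphisms, which requires simple connectedness of the domain (for $\Phi$ we use $G$, and for $\Psi$ we use $H$). One must also use the rigidity fact that homomorphisms out of a connected group are determined by their differentials. Everything else is the chain rule and a power series identity.
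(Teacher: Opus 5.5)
Your proof is correct and follows the same route as the paper. For necessity you differentiate the conjugacy relation at the identity and then at $t=0$. For sufficiency you integrate $A$ to a group isomorphism and use that homomorphisms out of a connected group are determined by their differentials. You also spell out two points the paper leaves implicit: why the integrated map $\Phi$ is invertible, and why $AD=FA$ gives $Ae^{tD}=e^{tF}A$.
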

\begin{proof}
  Suppose first that the flows are algebraically conjugate by a Lie group isomorphism $\Phi:G\longrightarrow H$. Taking differentials at the identity in the relation $\Phi\circ\varphi_t=\psi_t\circ\Phi$ we obtain
  \[
    d\Phi_e\circ (d\varphi_t)_e=(d\psi_t)_e\circ d\Phi_e.
  \]
  Since $(d\varphi_t)_e=e^{tD}$,  $(d\psi_t)_e=e^{tF}$, and writing $A=d\Phi_e$, we get $Ae^{tD}=e^{tF}A$ for every $t\in\mathbb R$. Differentiating at $t=0$, we obtain
  \[
    AD=FA.
  \]
  Conversely, suppose that there exists a Lie algebra isomorphism $A:\mathfrak g\longrightarrow\mathfrak h$ such that $AD=FA$. Since $G$ and $H$ are connected and simply connected, there exists a unique  Lie group isomorphism $\Phi:G\longrightarrow H$ such that $d\Phi_e=A$. The relation $AD=FA$ implies
  \[
    Ae^{tD}=e^{tF}A
  \]
  for every $t\in\mathbb R$. Therefore,
  \[
    d(\Phi\circ\varphi_t)_e=d(\psi_t\circ\Phi)_e.
  \]
  Since Lie group homomorphisms defined on a connected Lie group are determined by their differentials at the identity, it follows that
  \[
    \Phi\circ\varphi_t=\psi_t\circ\Phi
  \]
  for every $t\in\mathbb R$( see for instance \cite{hall}). Thus the flows are algebraically conjugate.
\end{proof}

The previous proposition shows that, on universal covering groups, algebraic conjugacy is completely described by conjugacy of derivations. We now describe which derivations define linear flows on a quotient Lie group.

Let $G=\widetilde G/\Gamma_G$, where $\widetilde G$ is connected and simply connected and $\Gamma_G\subset Z(\widetilde G)$ is a discrete central subgroup. For $D\in\operatorname{Der}(\mathfrak g)$, denote by $(\widetilde\varphi_t^D)_{t\in\mathbb R}$ the linear flow on $\widetilde G$ associated with $D$, it exists by Theorem 2 in \cite{jouanII}.

\begin{proposition} \label{prop:projectable_derivations}
  The flow $(\widetilde\varphi_t^D)_{t\in\mathbb R}$ projects to a linear flow on $G=\widetilde G/\Gamma_G$ if and only if
  \[
    \Gamma_G\subset \operatorname{Fix}(\widetilde\varphi^D),
  \]
  that is, $\widetilde\varphi_t^D(\gamma)=\gamma$ for every $\gamma\in\Gamma_G$ and every $t\in\mathbb R$.
\end{proposition}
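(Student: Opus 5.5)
We prove the two implications separately, writing $p=p_G$ and $\widetilde\varphi_t=\widetilde\varphi_t^D$. By ``projects'' we mean that there is a linear flow $(\varphi_t)_{t\in\mathbb R}$ on $G$ with $p\circ\widetilde\varphi_t=\varphi_t\circ p$ for every $t$. Its associated derivation is then $D$, because $dp_e$ identifies the Lie algebras of $\widetilde G$ and $G$ with $\mathfrak g$.

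\emph{Sufficiency.} Assume $\widetilde\varphi_t(\gamma)=\gamma$ for all $\gamma\in\Gamma_G$ and all $t$. Then each automorphism $\widetilde\varphi_t$ maps $\Gamma_G=\ker p$ into itself. Applying this to $\widetilde\varphi_{-t}=\widetilde\varphi_t^{-1}$ shows that it maps $\Gamma_G$ onto itself. Hence $\varphi_t(g\Gamma_G):=\widetilde\varphi_t(g)\Gamma_G$ is well defined: if $g'=g\gamma$, then $\widetilde\varphi_t(g')=\widetilde\varphi_t(g)\widetilde\varphi_t(\gamma)=\widetilde\varphi_t(g)\gamma$.

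We then check the required properties of $\varphi_t$.
\begin{itemize}
\item It is a group homomorphism, since $p$ and $\widetilde\varphi_t$ are.
\item It is smooth, since $p$ is a local diffeomorphism and $\varphi_t\circ p=p\circ\widetilde\varphi_t$ is smooth.
\item It is bijective with inverse $\varphi_{-t}$.
\item The flow property $\varphi_{t+s}=\varphi_t\circ\varphi_s$ and $\varphi_0=\operatorname{id}$ follow from the corresponding identities upstairs, using surjectivity of $p$.
\item The map $(t,g)\mapsto\varphi_t(g)$ is smooth, because locally it equals $p\circ\widetilde\varphi\circ(\mathrm{id}\times s)$ for a local section $s$ of $p$.
\end{itemize}
So $(\varphi_t)$ is the flow of the complete vector field $\mathcal X=\frac{d}{dt}\big|_{t=0}\varphi_t$, which is linear. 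Finally, $d(\varphi_t)_e\circ dp_e=dp_e\circ e^{tD}$, so the associated derivation is $D$.

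\emph{Necessity.} This is the step needing an actual argument. Suppose a linear flow $(\varphi_t)$ on $G$ satisfies $p\circ\widetilde\varphi_t=\varphi_t\circ p$. For $\gamma\in\Gamma_G$ we get $p(\widetilde\varphi_t(\gamma))=\varphi_t(e)=e$, so $t\mapsto\widetilde\varphi_t(\gamma)$ is a continuous curve in $\Gamma_G$. Since $\Gamma_G$ is discrete and $\mathbb R$ is connected, this curve is constant. Its value at $t=0$ is $\gamma$, so $\widetilde\varphi_t(\gamma)=\gamma$ for all $t$, i.e.\ $\Gamma_G\subset\operatorname{Fix}(\widetilde\varphi^D)$.

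We should also justify that the ``projection'' notion is the natural one. If one only assumes that $G$ carries some linear flow with associated derivation $D$, then lifting each $\varphi_t$ to the universal cover gives automorphisms $\widehat\varphi_t$ of $\widetilde G$. Their differentials at $e$ are $e^{tD}$, so by uniqueness (as in the proof of Proposition~\ref{prop:sc_conjugacy_derivations}) $\widehat\varphi_t=\widetilde\varphi_t$. Thus the two formulations agree, and the discreteness argument completes the proof.
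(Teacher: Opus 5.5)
Your proof is correct and follows the paper's argument: necessity uses the same discreteness argument (the continuous curve $t\mapsto\widetilde\varphi_t^D(\gamma)$ lies in $\Gamma_G$ and must be constant), and sufficiency uses the same well-definedness computation $\widetilde\varphi_t^D(g\gamma)=\widetilde\varphi_t^D(g)\gamma$. Your additional checks fill in details the paper leaves implicit: smoothness via local sections of $p$, joint smoothness of the flow, identification of the associated derivation as $D$, and the remark that any linear flow on $G$ with derivation $D$ lifts to $\widetilde\varphi^D$.
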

\begin{proof}
  Suppose first that $(\widetilde\varphi_t^D)_{t\in\mathbb R}$ projects to a  linear flow $(\varphi_t)_{t\in\mathbb R}$ on $G$. Then
  \[
    p_G\circ\widetilde\varphi_t^D=\varphi_t\circ p_G
  \]
  for every $t\in\mathbb R$. If $\gamma\in\Gamma_G$, then
  \[
    p_G(\widetilde\varphi_t^D(\gamma))  = \varphi_t(p_G(\gamma))  = \varphi_t(e)  = e.
  \]
  Hence $\widetilde\varphi_t^D(\gamma)\in\Gamma_G$. Since $\Gamma_G$ is discrete and the map $t\longmapsto \widetilde\varphi_t^D(\gamma)$ is continuous, this map is constant. As it is equal to $\gamma$ at $t=0$, we obtain $\widetilde\varphi_t^D(\gamma)=\gamma$ for every $t\in\mathbb R$. Thus 
  \[
    \Gamma_G\subset \operatorname{Fix}(\widetilde\varphi^D).
  \]

  Conversely, suppose that $\Gamma_G\subset \operatorname{Fix}(\widetilde\varphi^D)$. Define $\varphi_t:G\longrightarrow G$ by
  \[
    \varphi_t(p_G(\widetilde g))  = p_G(\widetilde\varphi_t^D(\widetilde g)).
  \]
  We show that $\varphi_t$ is well defined. If $p_G(\widetilde g_1)=p_G(\widetilde g_2)$, then there exists $\gamma\in\Gamma_G$ such that $\widetilde g_2=\widetilde g_1\gamma$. Since $\widetilde\varphi_t^D$ is an automorphism of $\widetilde G$, we have
  \[
    \widetilde\varphi_t^D(\widetilde g_2) = \widetilde\varphi_t^D(\widetilde g_1\gamma) = \widetilde\varphi_t^D(\widetilde g_1)\widetilde\varphi_t^D(\gamma).
  \]
  Since $\gamma\in\operatorname{Fix}(\widetilde\varphi^D)$, this gives $\widetilde\varphi_t^D(\widetilde g_2) = \widetilde\varphi_t^D(\widetilde g_1)\gamma$. Therefore,
  \[
    p_G(\widetilde\varphi_t^D(\widetilde g_2))  = p_G(\widetilde\varphi_t^D(\widetilde g_1)).
  \]
  Thus $\varphi_t$ is well defined. Since each $\widetilde\varphi_t^D$ is a Lie group automorphism, each  $\varphi_t$ is also a Lie group automorphism. Moreover,
  \[
    \varphi_{t+s}=\varphi_t\circ\varphi_s
  \]
  follows from the corresponding identity on $\widetilde G$. Hence  $(\varphi_t)_{t\in\mathbb R}$ is a linear flow on $G$.
\end{proof}

This motivates the following definition.

\begin{definition}
Let $G=\widetilde G/\Gamma_G$ be a connected Lie group. We define
\[
\operatorname{Der}(\mathfrak g;\Gamma_G)
=
\{D\in\operatorname{Der}(\mathfrak g);
\Gamma_G\subset\operatorname{Fix}(\widetilde\varphi^D)\},
\]
where $(\widetilde\varphi_t^D)_{t\in\mathbb R}$ is the linear flow on
$\widetilde G$ associated with $D$.
\end{definition}

Thus $\operatorname{Der}(\mathfrak g;\Gamma_G)$ is the set of derivations whose associated linear flows on the universal covering group descend to linear flows on $G$. The following elementary example shows that projectability is a genuine restriction and not a formal condition.

\begin{example}\label{ex:projectable_derivations_cylinder}
  Let $\widetilde G=\mathbb R^2$ and $G=\mathbb R\times S^1$. We identify $G$ with the quotient $G=\mathbb R^2/\Gamma_G$, where $\Gamma_G=\{(0,n);\, n\in\mathbb Z\}$. Thus $\Gamma_G$ is a discrete central subgroup of the universal covering group $\widetilde G=\mathbb R^2$.

  Since $\mathbb R^2$ is abelian, every linear map is a derivation. Hence $\operatorname{Der}(\mathbb R^2)=\mathfrak{gl}(2,\mathbb R)$. Let
  \[
    D=
    \begin{pmatrix}
      a & b\\
      c & d
    \end{pmatrix}
    \in\mathfrak{gl}(2,\mathbb R).
  \]
  The linear flow on $\widetilde G=\mathbb R^2$ associated with $D$ is $\widetilde\varphi_t^D(x)=e^{tD}x$. We want to determine when this flow descends to a linear flow on $G=\mathbb R^2/\Gamma_G$. By the projection criterion, this happens if and only if $\Gamma_G\subset \operatorname{Fix}(\widetilde\varphi^D)$. Since $\Gamma_G$ is generated by the element $(0,1)$, this condition is equivalent to $ \widetilde\varphi_t^D(0,1)=(0,1)$ for every $t\in\mathbb R$. In other words, $e^{tD}(0,1)=(0,1)$ for every $t\in\mathbb R$. Differentiating this identity at $t=0$, we obtain $D(0,1)=0$. Since $D(0,1)=(b,d)$, the condition becomes
  \[
    b=0 \qquad\text{and}\qquad  d=0.
  \]
  Consequently,
  \[
    \operatorname{Der}(\mathbb R^2;\Gamma_G) = 
    \left\{
    \begin{pmatrix}
      a & 0\\
      c & 0
    \end{pmatrix}; \ a,c\in\mathbb R
    \right\}.
  \]
  This example shows that the set of projectable derivations may be a proper  nontrivial subset of the full derivation algebra. Indeed,
  \[
    \operatorname{Der}(\mathbb R^2;\Gamma_G) \subsetneq \operatorname{Der}(\mathbb R^2)=\mathfrak{gl}(2,\mathbb R),
  \]
  but
  \[
    \operatorname{Der}(\mathbb R^2;\Gamma_G)\neq\{0\}.
  \]
  Thus there are derivations which generate linear flows on the universal covering  group $\mathbb R^2$, but do not generate linear flows on the quotient group $\mathbb R\times S^1$.
\end{example}

We now introduce the corresponding notion for Lie algebra isomorphisms.

\begin{definition}
  Let $G=\widetilde G/\Gamma_G$, $H=\widetilde H/\Gamma_H$ be connected Lie groups. We define
  \[
    \operatorname{Iso}(\mathfrak g,\mathfrak h;\Gamma_G,\Gamma_H)
  \]
  as the set of all Lie algebra isomorphisms $A:\mathfrak g\longrightarrow\mathfrak h$ such that, if $\widetilde A:\widetilde G\longrightarrow\widetilde H$ is the unique Lie group isomorphism satisfying $d\widetilde A_{\widetilde e}=A$, then $\widetilde A(\Gamma_G)=\Gamma_H$. The elements of $\operatorname{Iso}(\mathfrak g,\mathfrak h;\Gamma_G,\Gamma_H)$ will be called $(\Gamma_G,\Gamma_H)$-admissible isomorphisms.
\end{definition}

In the particular case $G=H$, we write
\[
  \operatorname{Aut}(\mathfrak g;\Gamma_G)  = \operatorname{Iso}(\mathfrak g,\mathfrak g;\Gamma_G,\Gamma_G) =
  \{A\in\operatorname{Aut}(\mathfrak g);  \widetilde A(\Gamma_G)=\Gamma_G\}.
\]

\begin{lemma} \label{lem:conjugacy_lifts}
  Let $p_G:\widetilde G\longrightarrow G$, $p_H:\widetilde H\longrightarrow H$ be the universal covering homomorphisms of the connected Lie groups $G$ and $H$. Let $(\varphi_t)_{t\in\mathbb R}$ and $(\psi_t)_{t\in\mathbb R}$ be linear flows on $G$ and $H$, respectively, and let $(\widetilde\varphi_t)_{t\in\mathbb R}$ and $(\widetilde\psi_t)_{t\in\mathbb R}$be their lifts to $\widetilde G$ and $\widetilde H$ fixing the identity. Suppose that $\Phi:G\longrightarrow H$ is a Lie group isomorphism such that
  \[
    \Phi\circ\varphi_t=\psi_t\circ\Phi
  \]
  for every $t\in\mathbb R$. Let  $\widetilde\Phi:\widetilde G\longrightarrow\widetilde H$ be the unique lift of $\Phi$ satisfying $
    p_H\circ\widetilde\Phi=\Phi\circ p_G$ and $\widetilde\Phi(\widetilde e)=\widetilde e$. Then
  \[
    \widetilde\Phi\circ\widetilde\varphi_t  = \widetilde\psi_t\circ\widetilde\Phi
  \]
  for every $t\in\mathbb R$.
\end{lemma}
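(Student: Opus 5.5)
Both sides of the desired identity are lifts of the same map $G\to H$ through the covering $p_H$. We will show that they agree at the identity, and then invoke uniqueness of lifts. Alternatively, we can compare differentials and use the fact, already used in Proposition~\ref{prop:sc_conjugacy_derivations}, that homomorphisms on a connected group are determined by their differentials at the identity. The second route is cleaner, so we will follow it.

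First we record the properties of the lifts. Since $\widetilde G$ is simply connected, the lift $\widetilde\varphi_t$ of $\varphi_t$ fixing $\widetilde e$ is the unique continuous map with $p_G\circ\widetilde\varphi_t=\varphi_t\circ p_G$ and $\widetilde\varphi_t(\widetilde e)=\widetilde e$. Moreover it is a Lie group homomorphism: $\varphi_t\circ p_G$ is a homomorphism $\widetilde G\to G$, so it lifts to a homomorphism $\widetilde G\to\widetilde G$ by the standard integration of the Lie algebra map $(d\varphi_t)_e$, and that homomorphism fixes $\widetilde e$, so by uniqueness it coincides with $\widetilde\varphi_t$. The same holds for $\widetilde\psi_t$ and $\widetilde\Phi$. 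Because $p_G$ and $p_H$ are local diffeomorphisms at the identity, which we use to identify $\mathfrak g$ and $\mathfrak h$ with the Lie algebras of the coverings, we get
\[
d(\widetilde\varphi_t)_{\widetilde e}=(d\varphi_t)_e,\qquad d(\widetilde\psi_t)_{\widetilde e}=(d\psi_t)_e,\qquad d\widetilde\Phi_{\widetilde e}=d\Phi_e.
\]

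Next we differentiate the conjugacy relation $\Phi\circ\varphi_t=\psi_t\circ\Phi$ at $e$, exactly as in the proof of Proposition~\ref{prop:sc_conjugacy_derivations}. This gives $d\Phi_e\circ(d\varphi_t)_e=(d\psi_t)_e\circ d\Phi_e$. By the identifications above, this says that the two homomorphisms $\widetilde\Phi\circ\widetilde\varphi_t$ and $\widetilde\psi_t\circ\widetilde\Phi$ from $\widetilde G$ to $\widetilde H$ have the same differential at $\widetilde e$. Since $\widetilde G$ is connected, they coincide. As a consistency check, both are lifts of $\Phi\circ\varphi_t=\psi_t\circ\Phi$ fixing $\widetilde e$, so uniqueness of lifts gives the same conclusion directly.

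The only point needing care is that the lifts fixing the identity are actually homomorphisms, so that the "determined by the differential" argument applies. If we want to avoid this, we can rely on the path-lifting uniqueness argument alone: $p_H\circ\widetilde\Phi\circ\widetilde\varphi_t=\Phi\circ\varphi_t\circ p_G=\psi_t\circ\Phi\circ p_G=p_H\circ\widetilde\psi_t\circ\widetilde\Phi$, both maps send $\widetilde e$ to $\widetilde e$, and $\widetilde G$ is connected. That argument needs no homomorphism property at all.
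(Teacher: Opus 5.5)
Your proposal is correct, and your closing paragraph is precisely the paper's proof. You fix $t$, observe $p_H\circ\widetilde\Phi\circ\widetilde\varphi_t=\psi_t\circ\Phi\circ p_G=p_H\circ\widetilde\psi_t\circ\widetilde\Phi$, note that both maps send $\widetilde e$ to $\widetilde e$, and conclude by uniqueness of lifts on the connected space $\widetilde G$. The common map being lifted is $\psi_t\circ\Phi\circ p_G:\widetilde G\to H$, not a map $G\to H$ as your opening sentence says, but your displayed computation has it right.

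The route you designate as primary, via differentials, is also valid, but for this lemma it is the heavier one. To know that the homomorphism obtained by integrating $(d\varphi_t)_e$ is the given lift $\widetilde\varphi_t$, you must check that it covers $\varphi_t\circ p_G$ (two homomorphisms $\widetilde G\to G$ with equal differentials). You then have to invoke uniqueness of lifts anyway, so you end up using the paper's key tool together with Lie's integration theorem.

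What the extra work buys is a fact the paper needs later but does not prove. In the proof of Theorem~\ref{thm:general_conjugacy_descent} the paper simply asserts that $\widetilde\Phi$ is a Lie group isomorphism. Your argument that the identity-fixing lift of a homomorphism is itself a homomorphism supplies this (bijectivity then follows by lifting $\Phi^{-1}$ the same way). That is exactly what makes $A=d\widetilde\Phi_{\widetilde e}$ a Lie algebra isomorphism with $\widetilde A=\widetilde\Phi$, and hence $A\in\operatorname{Iso}(\mathfrak g,\mathfrak h;\Gamma_G,\Gamma_H)$.
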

\begin{proof}
  Fix $t\in\mathbb R$. Consider the two maps
  \[
    F_1=\widetilde\Phi\circ\widetilde\varphi_t: \widetilde G\longrightarrow\widetilde H \quad \mbox{and} \quad  F_2=\widetilde\psi_t\circ\widetilde\Phi:  \widetilde G\longrightarrow\widetilde H.
  \]
  We will show that $F_1$ and $F_2$ are lifts of the same map from  $\widetilde G$ to $\widetilde H$. Using
  \[
    p_H\circ\widetilde\Phi=\Phi\circ p_G,\,\, p_G\circ\widetilde\varphi_t=\varphi_t\circ p_G\,\, \mbox{and}\,\, \Phi\circ\varphi_t=\psi_t\circ\Phi
  \]
  we get
  \[
    p_H\circ F_1  = \psi_t\circ\Phi\circ p_G = p_H\circ F_2 
  \]
  Thus $F_1$ and $F_2$ are lifts of the same map $\psi_t\circ\Phi\circ p_G:\widetilde G\longrightarrow H$. Moreover, both maps send the identity of $\widetilde G$ to the identity of  $\widetilde H$. Indeed, since the lifted flows fix the identity and $\widetilde\Phi(\widetilde e)=\widetilde e$, we have
  \[
    F_1(\widetilde e) = \widetilde\Phi(\widetilde\varphi_t(\widetilde e)) = \widetilde\Phi(\widetilde e)  = \widetilde e, \quad \mbox{  and} \quad F_2(\widetilde e) = \widetilde\psi_t(\widetilde\Phi(\widetilde e))  = \widetilde\psi_t(\widetilde e)  = \widetilde e.
  \]
  Since $\widetilde G$ is connected, the uniqueness of lifts with prescribed  initial point implies $F_1=F_2$. Therefore,
  \[
    \widetilde\Phi\circ\widetilde\varphi_t  = \widetilde\psi_t\circ\widetilde\Phi.
  \]
  Since $t\in\mathbb R$ was arbitrary, the equality holds for every $t\in\mathbb R$.
\end{proof}

The next theorem is the main criterion for descending conjugacies from universal covering groups to the original Lie groups.

\begin{theorem}\label{thm:general_conjugacy_descent}
  Let $G=\widetilde G/\Gamma_G$, $H=\widetilde H/\Gamma_H$ be connected Lie groups, where $\widetilde G$ and $\widetilde H$ are connected and simply connected. Let $D\in\operatorname{Der}(\mathfrak g;\Gamma_G)$, $F\in\operatorname{Der}(\mathfrak h;\Gamma_H)$. Denote by
  \[
    (\varphi_t^D)_{t\in\mathbb R} \qquad\text{and}\qquad  (\psi_t^F)_{t\in\mathbb R}
  \]
  the projected linear flows on $G$ and $H$, respectively.  Then $(\varphi_t^D)_{t\in\mathbb R}$ and  $(\psi_t^F)_{t\in\mathbb R}$ are algebraically conjugate if and only if there exists
  \[
    A\in\operatorname{Iso}(\mathfrak g,\mathfrak h;\Gamma_G,\Gamma_H) \quad \mbox{such that} \quad AD=FA.
  \]
\end{theorem}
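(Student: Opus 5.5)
The plan is to prove the two directions separately, moving between $G,H$ and their universal covers by means of Lemma~\ref{lem:conjugacy_lifts} and Proposition~\ref{prop:sc_conjugacy_derivations}. Throughout, $(\widetilde\varphi_t^D)$ and $(\widetilde\psi_t^F)$ denote the linear flows on $\widetilde G$ and $\widetilde H$ generated by $D$ and $F$. By construction (Proposition~\ref{prop:projectable_derivations}) these are the lifts of $\varphi_t^D$ and $\psi_t^F$ that fix the identity, since $p_G\circ\widetilde\varphi_t^D=\varphi_t^D\circ p_G$. We identify $\mathfrak g$ with the Lie algebra of $\widetilde G$ through $d(p_G)_{\widetilde e}$, and similarly for $\mathfrak h$.

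\emph{Necessity.} Let $\Phi:G\to H$ be a Lie group isomorphism with $\Phi\circ\varphi_t^D=\psi_t^F\circ\Phi$, and set $A=d\Phi_e$, which is a Lie algebra isomorphism. Differentiating the conjugacy relation at $e$ gives $Ae^{tD}=e^{tF}A$, hence $AD=FA$, exactly as in the proof of Proposition~\ref{prop:sc_conjugacy_derivations}. It remains to show that $A$ is admissible. Let $\widetilde\Phi$ be the lift of $\Phi$ with $p_H\circ\widetilde\Phi=\Phi\circ p_G$ and $\widetilde\Phi(\widetilde e)=\widetilde e$. This lift is a homomorphism: both $(g,h)\mapsto\widetilde\Phi(gh)$ and $(g,h)\mapsto\widetilde\Phi(g)\widetilde\Phi(h)$ lift the same map $\widetilde G\times\widetilde G\to H$ and agree at $(\widetilde e,\widetilde e)$. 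Lifting $\Phi^{-1}$ in the same way produces an inverse, so $\widetilde\Phi$ is an isomorphism. Since $d\widetilde\Phi_{\widetilde e}=A$, uniqueness gives $\widetilde\Phi=\widetilde A$. Now $\gamma\in\Gamma_G$ if and only if $\Phi(p_G(\gamma))=e$, if and only if $p_H(\widetilde\Phi(\gamma))=e$, so $\widetilde A(\Gamma_G)\subset\Gamma_H$. Applying the same argument to $\Phi^{-1}$ gives the reverse inclusion, and therefore $\widetilde A(\Gamma_G)=\Gamma_H$. Lemma~\ref{lem:conjugacy_lifts} confirms that $\widetilde A$ conjugates the lifted flows, although this is not strictly needed here.

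\emph{Sufficiency.} Let $A\in\operatorname{Iso}(\mathfrak g,\mathfrak h;\Gamma_G,\Gamma_H)$ satisfy $AD=FA$. Proposition~\ref{prop:sc_conjugacy_derivations}, applied to $\widetilde G$ and $\widetilde H$, shows that $\widetilde A$ conjugates the lifted flows: $\widetilde A\circ\widetilde\varphi_t^D=\widetilde\psi_t^F\circ\widetilde A$. Because $\widetilde A(\Gamma_G)=\Gamma_H$, the formula $\Phi(p_G(\widetilde g))=p_H(\widetilde A(\widetilde g))$ gives a well-defined homomorphism $G\to H$. Indeed, replacing $\widetilde g$ by $\widetilde g\gamma$ multiplies $\widetilde A(\widetilde g)$ by $\widetilde A(\gamma)\in\Gamma_H$. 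Using $\widetilde A^{-1}(\Gamma_H)=\Gamma_G$, the same construction applied to $\widetilde A^{-1}$ yields the inverse, so $\Phi$ is a Lie group isomorphism. Finally, we compute
\[
\Phi\circ\varphi_t^D\circ p_G=p_H\circ\widetilde A\circ\widetilde\varphi_t^D=p_H\circ\widetilde\psi_t^F\circ\widetilde A=\psi_t^F\circ p_H\circ\widetilde A=\psi_t^F\circ\Phi\circ p_G.
\]
Since $p_G$ is surjective, this gives $\Phi\circ\varphi_t^D=\psi_t^F\circ\Phi$.

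The main obstacle is the identification $\widetilde\Phi=\widetilde A$ in the necessity direction. It requires showing that the topological lift of $\Phi$ is itself a Lie group isomorphism with differential $A$ at the identity, and that it maps $\Gamma_G$ onto $\Gamma_H$ rather than merely into it. Both points are settled by the uniqueness-of-lifts argument above together with the symmetric argument for $\Phi^{-1}$. The remaining steps are direct verifications.
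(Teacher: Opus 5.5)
Your proof is correct and follows essentially the same route as the paper. For necessity you lift $\Phi$ to $\widetilde\Phi=\widetilde A$ and compare kernels, and for sufficiency you descend $\widetilde A$ using $\widetilde A(\Gamma_G)=\Gamma_H$. The only differences are minor: you obtain $AD=FA$ by differentiating directly on $G$ rather than through Lemma~\ref{lem:conjugacy_lifts}, and you give the uniqueness-of-lifts argument that $\widetilde\Phi$ is an isomorphism mapping $\Gamma_G$ onto $\Gamma_H$, which the paper only asserts.
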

\begin{proof}
  Suppose first that the flows  $(\varphi_t^D)_{t\in\mathbb R}$ and $(\psi_t^F)_{t\in\mathbb R}$ are algebraically conjugate. Then there exists a Lie group isomorphism $\Phi:G\longrightarrow H$ such that $\Phi\circ\varphi_t^D=\psi_t^F\circ\Phi$ for every $t\in\mathbb R$. Let $\widetilde\Phi:\widetilde G\longrightarrow\widetilde H$ be the unique lift of $\Phi$ satisfying $\widetilde\Phi(\widetilde e)=\widetilde e$. Then $\widetilde\Phi$ is a Lie group isomorphism and $p_H\circ\widetilde\Phi=\Phi\circ p_G$. Moreover, $\widetilde\Phi$ maps the kernel of $p_G$ onto the kernel of  $p_H$. Hence
  \[
    \widetilde\Phi(\Gamma_G)=\Gamma_H.
  \]
  Set $A=d\widetilde\Phi_{\widetilde e}$. Then $A:\mathfrak g\to\mathfrak h$ is a Lie algebra isomorphism and $A\in\operatorname{Iso}(\mathfrak g,\mathfrak h;\Gamma_G,\Gamma_H)$. We now prove that $AD=FA$. From Lemma \ref{lem:conjugacy_lifts} we have that the conjugacy relation on $G$ lifts to
  \[
    \widetilde\Phi\circ\widetilde\varphi_t^D  = \widetilde\psi_t^F\circ\widetilde\Phi
  \]
  for every $t\in\mathbb R$. Taking differentials at the identity gives $Ae^{tD}=e^{tF}A$. Differentiating at $t=0$, we obtain $AD=FA$.

  Conversely, suppose that there exists $A\in\operatorname{Iso}(\mathfrak g,\mathfrak h;\Gamma_G,\Gamma_H)$ such that $AD=FA$. Let $    \widetilde A:\widetilde G\longrightarrow\widetilde H$ be the unique Lie group isomorphism satisfying $d\widetilde A_{\widetilde e}=A$. Since $\widetilde G$ and $\widetilde H$ are connected and simply connected,  the derivations $D$ and $F$ determine unique linear flows $(\widetilde\varphi_t^D)_{t\in\mathbb R}$ and $(\widetilde\psi_t^F)_{t\in\mathbb R}$ on $\widetilde G$ and $\widetilde H$, respectively. A direct argument shows that 
  \[
    \widetilde A\circ\widetilde\varphi_t^D  = \widetilde\psi_t^F\circ\widetilde A
  \]
  for every $t\in\mathbb R$. Being $A\in\operatorname{Iso}(\mathfrak g,\mathfrak h;\Gamma_G,\Gamma_H)$, we have $\widetilde A(\Gamma_G)=\Gamma_H$. Therefore, $\widetilde A$ descends to a Lie group isomorphism $\Phi:G\longrightarrow H$ defined by
  \[
    \Phi(p_G(\widetilde g)) = p_H(\widetilde A(\widetilde g)).
  \]
  Finally, using
  \[
    p_G\circ\widetilde\varphi_t^D = \varphi_t^D\circ p_G,\qquad p_H\circ\widetilde\psi_t^F  = \psi_t^F\circ p_H,
  \]
  and
  \[
    \widetilde A\circ\widetilde\varphi_t^D  = \widetilde\psi_t^F\circ\widetilde A,
  \]
  we obtain
  \[
    \Phi\circ\varphi_t^D  = \psi_t^F\circ\Phi
  \]
  for every $t\in\mathbb R$. Thus the projected flows are algebraically conjugate.
\end{proof}

Theorem \ref{thm:general_conjugacy_descent} separates the conjugacy problem into two distinct levels. The first level is infinitesimal and is governed by the equation
\[
  AD=FA.
\]
Together with the integration of $A$ to the universal covering groups, this condition guarantees a conjugacy between the lifted linear flows. The second level is global and is governed by the  admissibility condition
\[
  \widetilde A(\Gamma_G)=\Gamma_H.
\]
This condition is necessary and sufficient for the lifted conjugacy to descend  to the original Lie groups.

Thus, even when two derivations are conjugate at the Lie algebra level, the corresponding linear flows may fail to be conjugate on the original Lie groups  if the conjugating isomorphism of the universal covering groups does not map  $\Gamma_G$ onto $\Gamma_H$.

As an immediate consequence, for a fixed connected Lie group $G=\widetilde G/\Gamma_G$,the algebraic classification of linear flows on $G$ is not governed by the full action of $\operatorname{Aut}(\mathfrak g)$ on
$\operatorname{Der}(\mathfrak g)$, but by the restricted action
\[
  \operatorname{Aut}(\mathfrak g;\Gamma_G)  \times  \operatorname{Der}(\mathfrak g;\Gamma_G)  \longrightarrow \operatorname{Der}(\mathfrak g;\Gamma_G),
\]
given by $(A,D)\longmapsto ADA^{-1}$.

\begin{corollary} \label{cor:classification_fixed_group}
  Let $G=\widetilde G/\Gamma_G$ be a connected Lie group. Let $D,F\in\operatorname{Der}(\mathfrak g;\Gamma_G)$. Then the linear flows on $G$ associated with $D$ and $F$ are algebraically conjugate by an automorphism of $G$ if and only if there exists $A\in\operatorname{Aut}(\mathfrak g;\Gamma_G)$ such that $AD=FA$.
\end{corollary}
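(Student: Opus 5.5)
This corollary is the special case $H=G$, $\Gamma_H=\Gamma_G$ of Theorem~\ref{thm:general_conjugacy_descent}. Write $(\varphi_t^D)_{t\in\mathbb R}$ and $(\varphi_t^F)_{t\in\mathbb R}$ for the projected linear flows on $G$. These exist by Proposition~\ref{prop:projectable_derivations}, since $D,F\in\operatorname{Der}(\mathfrak g;\Gamma_G)$. Being algebraically conjugate ``by an automorphism of $G$'' means there is a Lie group isomorphism $\Phi:G\to G$ with $\Phi\circ\varphi_t^D=\varphi_t^F\circ\Phi$ for all $t$. This is exactly the definition of algebraic conjugacy when the two groups coincide, so no extra argument is needed to match the hypotheses.

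For the forward direction, I apply the ``only if'' part of Theorem~\ref{thm:general_conjugacy_descent} with $H=G$. It gives $A\in\operatorname{Iso}(\mathfrak g,\mathfrak g;\Gamma_G,\Gamma_G)$ with $AD=FA$. By the notation introduced after the definition of admissible isomorphisms, this set is $\operatorname{Aut}(\mathfrak g;\Gamma_G)$. Concretely, $A=d\widetilde\Phi_{\widetilde e}$, where $\widetilde\Phi$ is the lift of $\Phi$ fixing $\widetilde e$, and $\widetilde\Phi(\Gamma_G)=\Gamma_G$.

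For the converse, take $A\in\operatorname{Aut}(\mathfrak g;\Gamma_G)$ with $AD=FA$. The ``if'' part of the theorem then yields an isomorphism $\Phi:G\to G$, obtained by descending $\widetilde A$, that conjugates the two projected flows. Such a $\Phi$ is an automorphism of $G$.

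There is no real obstacle here. The only point to verify is the bookkeeping that the projected flows in the theorem are the ``linear flows on $G$ associated with $D$ and $F$''. This holds by the uniqueness in Proposition~\ref{prop:projectable_derivations}: the projection of $\widetilde\varphi^D_t$ is the unique flow $\varphi_t$ on $G$ with $p_G\circ\widetilde\varphi_t^D=\varphi_t\circ p_G$.
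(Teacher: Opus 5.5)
Your proposal is correct and follows the paper exactly: the paper gives no separate proof and presents this corollary as the immediate specialization $H=G$, $\Gamma_H=\Gamma_G$ of Theorem~\ref{thm:general_conjugacy_descent}, with $\operatorname{Iso}(\mathfrak g,\mathfrak g;\Gamma_G,\Gamma_G)=\operatorname{Aut}(\mathfrak g;\Gamma_G)$ by definition. The uniqueness of the projected flow that you invoke at the end is not stated in Proposition~\ref{prop:projectable_derivations}, but it follows at once from the surjectivity of $p_G$.
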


\begin{remark}
  If $G$ is connected and simply connected, then $\Gamma_G=\{\widetilde e\}$. Hence
  \[
    \operatorname{Der}(\mathfrak g;\Gamma_G)  = \operatorname{Der}(\mathfrak g) \quad \mbox{and} \quad \operatorname{Aut}(\mathfrak g;\Gamma_G)  = \operatorname{Aut}(\mathfrak g).
  \]
  Thus the simply connected case is recovered as the particular case in which no global obstruction appears.

  For non-simply connected Lie groups, the subgroup $\Gamma_G$ restricts both the derivations that generate linear flows on $G$ and the automorphisms that  may be used to conjugate such flows. Therefore, the global topology of the Lie  group enters the algebraic classification through the pair
  \[
    \left(  \operatorname{Der}(\mathfrak g;\Gamma_G), \operatorname{Aut}(\mathfrak g;\Gamma_G)  \right).
  \]
\end{remark}

%---------------
%---------------

\section{Fixed Point Subgroups as Obstructions}

In this section we record some consequences of the fixed point subgroup of a linear flow. The purpose is to extract obstructions to algebraic conjugacy. There are two different levels of obstruction. The first one is infinitesimal and is detected by the kernel of the associated derivation. The second one is global and appears, for quotient Lie groups, through the requirement that the discrete central subgroup defining the quotient must be fixed by the lifted flow.

Let $(\varphi_t)_{t\in\mathbb R}$ be a linear flow on a Lie group $G$. We denote
its fixed point set by
\[
  \operatorname{Fix}(\varphi)
  =
  \{g\in G;\ \varphi_t(g)=g,\ \forall t\in\mathbb R\}.
\]

\begin{lemma}
  Let $(\varphi_t)_{t\in\mathbb R}$ be a linear flow on a Lie group $G$. Then $\operatorname{Fix}(\varphi)$ is a closed Lie subgroup of $G$.
\end{lemma}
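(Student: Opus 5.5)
The plan is to show that $\operatorname{Fix}(\varphi)$ is a subgroup and that it is closed in $G$, and then to invoke Cartan's closed subgroup theorem. That theorem says every closed subgroup of a Lie group is an embedded Lie subgroup (see for instance \cite{hall}).

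First, the subgroup property follows from the fact that each $\varphi_t$ is an automorphism of $G$:
\begin{itemize}
\item we have $\varphi_t(e)=e$, so $e\in\operatorname{Fix}(\varphi)$;
\item if $g,h\in\operatorname{Fix}(\varphi)$, then $\varphi_t(gh^{-1})=\varphi_t(g)\varphi_t(h)^{-1}=gh^{-1}$ for every $t\in\mathbb R$.
\end{itemize}

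Second, for closedness we write
\[
  \operatorname{Fix}(\varphi)=\bigcap_{t\in\mathbb R}\{g\in G;\ \varphi_t(g)=g\}.
\]
Each set in this intersection is the preimage of the diagonal $\Delta\subset G\times G$ under the continuous map $g\mapsto(\varphi_t(g),g)$. Since $G$ is Hausdorff, $\Delta$ is closed, so each set is closed. An arbitrary intersection of closed sets is closed, hence $\operatorname{Fix}(\varphi)$ is closed.

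Finally, Cartan's theorem gives that $\operatorname{Fix}(\varphi)$ is a closed embedded Lie subgroup.

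As a complement, I would identify its Lie algebra with $\ker D$. This prepares the infinitesimal obstruction announced in the introduction, although it is not needed for the lemma itself.
\begin{itemize}
\item If $X\in\ker D$, then $\varphi_t(\exp sX)=\exp(s\,e^{tD}X)=\exp sX$, so $\exp(\mathbb RX)\subset\operatorname{Fix}(\varphi)$.
\item Conversely, if $\exp(sX)\in\operatorname{Fix}(\varphi)$ for all $s$, then $\exp(s\,e^{tD}X)=\exp(sX)$ for all $s,t$. Differentiating at $s=0$ gives $e^{tD}X=X$, and hence $DX=0$.
\end{itemize}

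There is no real obstacle in the lemma. The only point needing care is to rely on Cartan's theorem rather than build charts by hand, and to note that continuity of each $\varphi_t$ is all that closedness requires.
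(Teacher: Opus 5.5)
Your proposal is correct and follows essentially the same route as the paper: each set $\{g\in G;\ \varphi_t(g)=g\}$ is closed by continuity, the subgroup property comes from $\varphi_t(gh^{-1})=gh^{-1}$, and then the closed subgroup theorem applies. Your complement identifying the Lie algebra of $\operatorname{Fix}(\varphi)$ with $\ker D$ is not needed for this lemma; the paper proves it separately in a later proposition.
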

\begin{proof}
  For each $t\in\mathbb R$, the set $\operatorname{Fix}(\varphi_t) =  \{g\in G;\ \varphi_t(g)=g\}$ is closed, since $\varphi_t$ is continuous. Hence
  \[
    \operatorname{Fix}(\varphi) = \bigcap_{t\in\mathbb R}\operatorname{Fix}(\varphi_t)
  \]
  is closed. Moreover, if $g,h\in\operatorname{Fix}(\varphi)$, then, since each $\varphi_t$ is an automorphism,
  \[
    \varphi_t(gh^{-1}) = \varphi_t(g)\varphi_t(h)^{-1} = gh^{-1}
  \]
  for every $t\in\mathbb R$. Thus $gh^{-1}\in\operatorname{Fix}(\varphi)$. Therefore $\operatorname{Fix}(\varphi)$ is a closed subgroup of $G$. By the closed subgroup theorem, it is a Lie subgroup.
\end{proof}

The fixed point subgroup is preserved by algebraic conjugacy.

\begin{proposition}
  Let $(\varphi_t)_{t\in\mathbb R}$ and $(\psi_t)_{t\in\mathbb R}$ be linear  flows on Lie groups $G$ and $H$, respectively. Suppose that there exists a  Lie group isomorphism $\Phi:G\longrightarrow H$ such that
  \[
    \Phi\circ\varphi_t=\psi_t\circ\Phi
  \]
  for every $t\in\mathbb R$. Then
  \[
    \Phi(\operatorname{Fix}(\varphi)) = \operatorname{Fix}(\psi).
  \]
  In particular, if $\operatorname{Fix}(\varphi)$ and $\operatorname{Fix}(\psi)$ are not isomorphic as Lie groups, then the flows are not algebraically conjugate.
\end{proposition}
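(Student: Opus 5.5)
The plan is to prove the two inclusions $\Phi(\operatorname{Fix}(\varphi))\subset\operatorname{Fix}(\psi)$ and $\Phi^{-1}(\operatorname{Fix}(\psi))\subset\operatorname{Fix}(\varphi)$ directly from the conjugacy relation. Then I will read off the "in particular" statement from the fact that $\Phi$ restricts to an isomorphism of the fixed point subgroups.

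For the first inclusion, I take $g\in\operatorname{Fix}(\varphi)$. For every $t\in\mathbb R$ the hypothesis $\Phi\circ\varphi_t=\psi_t\circ\Phi$ gives
\[
  \psi_t(\Phi(g))=\Phi(\varphi_t(g))=\Phi(g),
\]
so $\Phi(g)\in\operatorname{Fix}(\psi)$.

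For the reverse inclusion, I note that the conjugacy relation can be rewritten as $\Phi^{-1}\circ\psi_t=\varphi_t\circ\Phi^{-1}$ for all $t$. This follows by composing the hypothesis with $\Phi^{-1}$ on both sides. The same computation then shows that $h\in\operatorname{Fix}(\psi)$ implies $\Phi^{-1}(h)\in\operatorname{Fix}(\varphi)$, and hence $h=\Phi(\Phi^{-1}(h))\in\Phi(\operatorname{Fix}(\varphi))$. Together the two inclusions give equality.

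For the final assertion, I invoke the preceding lemma: $\operatorname{Fix}(\varphi)$ and $\operatorname{Fix}(\psi)$ are closed Lie subgroups of $G$ and $H$. The restriction $\Phi|_{\operatorname{Fix}(\varphi)}$ is then a bijective group homomorphism onto $\operatorname{Fix}(\psi)$, and it is smooth as a map into $H$. Since $\operatorname{Fix}(\psi)$ is a closed, hence embedded, submanifold, the restriction is smooth into $\operatorname{Fix}(\psi)$. The same holds for the restriction of $\Phi^{-1}$, so the restriction is a Lie group isomorphism. Taking the contrapositive: if the fixed point subgroups are not isomorphic, no such $\Phi$ exists.

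The only mildly delicate point is this last smoothness claim. It uses embeddedness of closed subgroups, which the closed subgroup theorem cited in the lemma provides, so I expect no real obstacle.
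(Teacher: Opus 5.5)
Your proof is correct and follows the paper's argument: you get the forward inclusion directly from $\Phi\circ\varphi_t=\psi_t\circ\Phi$, and the reverse inclusion by running the same argument with $\Phi^{-1}$. Your additional check that $\Phi$ restricts to a Lie group isomorphism $\operatorname{Fix}(\varphi)\to\operatorname{Fix}(\psi)$, using embeddedness of closed subgroups, supplies the ``in particular'' clause, which the paper leaves implicit.
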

\begin{proof}
  If $g\in\operatorname{Fix}(\varphi)$, then
  \[
    \psi_t(\Phi(g)) = \Phi(\varphi_t(g))  = \Phi(g)
  \]
  for every $t\in\mathbb R$. Hence  $\Phi(g)\in\operatorname{Fix}(\psi)$. This gives
  \[
    \Phi(\operatorname{Fix}(\varphi)) \subset \operatorname{Fix}(\psi).
  \]
  Applying the same argument to $\Phi^{-1}$, we obtain the reverse inclusion.
\end{proof}

We now relate this obstruction to the derivation associated with a linear flow.

\begin{proposition}
  Let $G$ be a connected Lie group and let $(\varphi_t)_{t\in\mathbb R}$ be a linear flow on $G$. If $D\in\operatorname{Der}(\mathfrak g)$ is its associated  derivation, then
  \[
    \operatorname{Lie}(\operatorname{Fix}(\varphi))=\ker D.
  \]
\end{proposition}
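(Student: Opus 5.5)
By the previous lemma, $\operatorname{Fix}(\varphi)$ is a closed Lie subgroup of $G$, so its Lie algebra is characterized as
\[
  \operatorname{Lie}(\operatorname{Fix}(\varphi))=\{X\in\mathfrak g;\ \exp(sX)\in\operatorname{Fix}(\varphi)\ \text{for all } s\in\mathbb R\}.
\]
The plan is to prove the two inclusions with this characterization. The only tool needed beyond it is the naturality of the exponential map under automorphisms:
\[
  \varphi_t(\exp(sX))=\exp\big(s\,(d\varphi_t)_eX\big)=\exp\big(s\,e^{tD}X\big).
\]

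\emph{Inclusion $\ker D\subset\operatorname{Lie}(\operatorname{Fix}(\varphi))$.} Let $DX=0$. Then $e^{tD}X=X$ for every $t$, so the formula above gives $\varphi_t(\exp(sX))=\exp(sX)$ for all $s,t\in\mathbb R$. Hence the whole one-parameter subgroup $\exp(\mathbb RX)$ lies in $\operatorname{Fix}(\varphi)$, and therefore $X\in\operatorname{Lie}(\operatorname{Fix}(\varphi))$.

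\emph{Inclusion $\operatorname{Lie}(\operatorname{Fix}(\varphi))\subset\ker D$.} Let $X\in\operatorname{Lie}(\operatorname{Fix}(\varphi))$. Then for all $s,t$ we have $\exp(s\,e^{tD}X)=\varphi_t(\exp(sX))=\exp(sX)$. The simplest way to extract information is to differentiate in $s$ at $s=0$ with $t$ fixed. This gives $e^{tD}X=X$ for every $t$, since $\frac{d}{ds}\big|_{s=0}\exp(sY)=Y$. Differentiating next in $t$ at $t=0$ yields $DX=0$.

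An equivalent route avoids the exponential identity in the second inclusion. Since $\varphi_t$ restricts to the identity on the subgroup $\operatorname{Fix}(\varphi)$, its differential $e^{tD}$ restricts to the identity on $\operatorname{Lie}(\operatorname{Fix}(\varphi))$. Differentiating in $t$ then gives the same conclusion.

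The only point needing care is the characterization of the Lie algebra of a closed subgroup by one-parameter subgroups, together with the naturality $\varphi\circ\exp=\exp\circ\, d\varphi_e$ for automorphisms. Both are standard facts, for instance in \cite{hall}, and I would simply cite them. Connectedness of $G$ plays no essential role in this argument.
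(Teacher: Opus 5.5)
Your proof is correct and follows essentially the same route as the paper. Both inclusions come from the naturality identity $\varphi_t(\exp(sX))=\exp(s\,e^{tD}X)$, and the reverse inclusion is obtained by differentiating first in $s$ at $s=0$ and then in $t$ at $t=0$. Your alternative argument, restricting $(d\varphi_t)_e$ to the tangent space of $\operatorname{Fix}(\varphi)$, is also valid, and you are right that connectedness is not used.
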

\begin{proof}
  By the previous lemma, $\operatorname{Fix}(\varphi)$ is a Lie subgroup of $G$,  so its Lie algebra is well defined. Let $X\in\ker D$. Then $e^{tD}X=X$ for every $t\in\mathbb R$. Since   $(d\varphi_t)_e=e^{tD}$ and $\varphi_t$ is an automorphism, we have
  \[
    \varphi_t(\exp(sX)) = \exp(se^{tD}X)  = \exp(sX)
  \]
  for all $s,t\in\mathbb R$. Hence the one-parameter subgroup $s\mapsto\exp(sX)$ is contained in $\operatorname{Fix}(\varphi)$, and so
  \[
    X\in \operatorname{Lie}(\operatorname{Fix}(\varphi)).
  \]

  Conversely, let $X\in \operatorname{Lie}(\operatorname{Fix}(\varphi))$. Then  $\exp(sX)\in\operatorname{Fix}(\varphi)$ for $s$ sufficiently small. Thus
  \[
    \varphi_t(\exp(sX))=\exp(sX).
  \]
  Using again that $\varphi_t$ is an automorphism, this becomes $\exp(se^{tD}X)=\exp(sX)$. For $s$ sufficiently small, differentiating at $s=0$ gives
  \[
    e^{tD}X=X
  \]
  for every $t\in\mathbb R$. Differentiating at $t=0$, we obtain $DX=0$. Thus $X\in\ker D$. The equality follows.
\end{proof}

As a consequence, the kernel of the associated derivation is an infinitesimal
invariant of algebraic conjugacy.

\begin{corollary}
  Let $(\varphi_t)_{t\in\mathbb R}$ and $(\psi_t)_{t\in\mathbb R}$ be algebraically conjugate linear flows on connected Lie groups $G$ and $H$. Let $D\in\operatorname{Der}(\mathfrak g)$ and $F\in\operatorname{Der}(\mathfrak h)$ be their associated derivations. If $\Phi:G\longrightarrow H$ is an algebraic conjugacy and $A=d\Phi_e:\mathfrak g\longrightarrow\mathfrak h$, then
  \[
    A(\ker D)=\ker F.
  \]
  In particular, if $\ker D$ and $\ker F$ are not isomorphic as Lie algebras, then the corresponding linear flows cannot be algebraically conjugate.
\end{corollary}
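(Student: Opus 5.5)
The plan is to deduce the statement from the two preceding propositions: the invariance of fixed point subgroups under algebraic conjugacy, and the identification $\operatorname{Lie}(\operatorname{Fix}(\varphi))=\ker D$. The argument is short, but some care is needed with the infinitesimal relation, since $A=d\Phi_e$.

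First, the previous proposition gives $\Phi(\operatorname{Fix}(\varphi))=\operatorname{Fix}(\psi)$. By the earlier lemma both sets are closed Lie subgroups of $G$ and $H$. Restricting $\Phi$ gives a bijective Lie group homomorphism $\Phi|_{\operatorname{Fix}(\varphi)}:\operatorname{Fix}(\varphi)\to\operatorname{Fix}(\psi)$. Its inverse is $\Phi^{-1}|_{\operatorname{Fix}(\psi)}$, so it is a Lie group isomorphism. Its differential at the identity is the restriction of $A=d\Phi_e$ to $\operatorname{Lie}(\operatorname{Fix}(\varphi))$.

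Second, a Lie group isomorphism carries the Lie algebra of a closed subgroup onto the Lie algebra of its image. Concretely, $X\in\operatorname{Lie}(K)$ iff $\exp(sX)\in K$ for all $s$, and $\Phi(\exp sX)=\exp(sAX)$. Hence $A(\operatorname{Lie}(\operatorname{Fix}(\varphi)))=\operatorname{Lie}(\operatorname{Fix}(\psi))$. Combining this with $\operatorname{Lie}(\operatorname{Fix}(\varphi))=\ker D$ and $\operatorname{Lie}(\operatorname{Fix}(\psi))=\ker F$, we get $A(\ker D)=\ker F$.

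As a cross-check that avoids the subgroup machinery, differentiate $\Phi\circ\varphi_t=\psi_t\circ\Phi$ at $e$ and then at $t=0$, as in Proposition \ref{prop:sc_conjugacy_derivations}, to get $AD=FA$. Then $X\in\ker D$ gives $FAX=ADX=0$, so $A(\ker D)\subset\ker F$. Applying the same argument to $A^{-1}$, which satisfies $A^{-1}F=DA^{-1}$, gives the reverse inclusion. I would present this direct version, since it is more elementary, and mention the subgroup version only as its interpretation.

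For the ``in particular'' part: $A$ is a Lie algebra isomorphism and kernels of derivations are subalgebras. Indeed, if $DX=DY=0$ then $D[X,Y]=[DX,Y]+[X,DY]=0$. So $A|_{\ker D}$ is a Lie algebra isomorphism onto $\ker F$, and non-isomorphic kernels rule out any algebraic conjugacy.

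The main step needing care is justifying $d(\varphi_t)_e=e^{tD}$ and the differentiation in $t$. These are standard facts already used in the paper, so I expect no real obstacle.
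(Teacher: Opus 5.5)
Your proposal is correct. The first argument you sketch is the paper's proof. The paper combines $\Phi(\operatorname{Fix}(\varphi))=\operatorname{Fix}(\psi)$ with $\operatorname{Lie}(\operatorname{Fix}(\varphi))=\ker D$ and then just says ``taking Lie algebras''. Your observation that $X\in\operatorname{Lie}(K)$ iff $\exp(sX)\in K$ for all $s$, together with $\Phi(\exp sX)=\exp(sAX)$ and the bijectivity of $\Phi$, is exactly what that phrase leaves unsaid.

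The version you say you would actually present takes a genuinely different route. You differentiate the conjugacy to get $AD=FA$, read off $A(\ker D)\subset\ker F$, and obtain the reverse inclusion from $A^{-1}F=DA^{-1}$. This bypasses the fixed point subgroup entirely: you need neither the closed subgroup theorem nor the proposition $\operatorname{Lie}(\operatorname{Fix}(\varphi))=\ker D$. It also proves slightly more. The identity $A(\ker D)=\ker F$ holds for \emph{every} Lie algebra isomorphism with $AD=FA$, whether or not it is admissible. So the kernel obstruction already follows from the infinitesimal condition in Proposition~\ref{prop:sc_conjugacy_derivations}, and it rules out conjugacy even of the lifted flows on the universal coverings.

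What the paper's route buys is conceptual. It presents the kernel condition as the infinitesimal shadow of the global invariant $\Phi(\operatorname{Fix}(\varphi))=\operatorname{Fix}(\psi)$, which is the theme of that section. It also gets the subalgebra property of $\ker D$ for free, as the Lie algebra of a subgroup, whereas you check it directly from the derivation identity. That explicit check, showing $A|_{\ker D}$ is a Lie algebra isomorphism onto $\ker F$, also makes the ``in particular'' clause airtight, which the paper leaves implicit.
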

\begin{proof}
  By the invariance of the fixed point subgroup,
  \[
    \Phi(\operatorname{Fix}(\varphi)) = \operatorname{Fix}(\psi).
  \]
  Taking Lie algebras and using
  \[
    \operatorname{Lie}(\operatorname{Fix}(\varphi))=\ker D, \qquad  \operatorname{Lie}(\operatorname{Fix}(\psi))=\ker F,
  \]
  we obtain
  \[
    A(\ker D)=\ker F.
  \]
\end{proof}

This gives a first obstruction to algebraic conjugacy. If $D\in\operatorname{Der}(\mathfrak g)$, then the lifted linear flow$(\widetilde\varphi_t^D)_{t\in\mathbb R}$ descends to $G$ if and only if $\Gamma_G\subset \operatorname{Fix}(\widetilde\varphi^D)$.
Although
\[
  \operatorname{Lie}(\operatorname{Fix}(\widetilde\varphi^D))=\ker D,
\]
the condition $\Gamma_G\subset \operatorname{Fix}(\widetilde\varphi^D)$ is not determined only by $\ker D$. Indeed, $\Gamma_G$ is discrete, and therefore $\operatorname{Lie}(\Gamma_G)=0$. Thus the projectability condition depends on specific global fixed points of the  lifted flow, not merely on the infinitesimal fixed point algebra.

Consequently, two distinct obstructions must be kept separate:
\[
  A(\ker D)=\ker F
\]
is an infinitesimal obstruction to conjugacy, whereas $\Gamma_G\subset \operatorname{Fix}(\widetilde\varphi^D)$ is a global obstruction to the existence of the projected linear flow on the  quotient.

\begin{example}
  Let
  \[
    \widetilde G=\mathbb R^2, \qquad  G=\mathbb R\times S^1\simeq \mathbb R^2/\Gamma_G, \qquad  \Gamma_G=\{(0,n);\ n\in\mathbb Z\}.
  \]
  As in Example \ref{ex:projectable_derivations_cylinder}, the projectable derivations are precisely the matrices
  \[
    D=
    \begin{pmatrix}
      a & 0\\
      c & 0
    \end{pmatrix}.
  \]
  Consider
  \[
    D=
    \begin{pmatrix}
      1 & 0\\
      0 & 0
    \end{pmatrix},
    \qquad
    F=
    \begin{pmatrix}
      0 & 0\\
      0 & 0
    \end{pmatrix}.
  \]
  Then
  \[
    \ker D=\operatorname{span}\{(0,1)\},  \qquad  \ker F=\mathbb R^2.
  \]
  Hence
  \[
    \dim\ker D=1, \qquad  \dim\ker F=2.
  \]
  Therefore $\ker D$ and $\ker F$ are not isomorphic as vector spaces, and the linear flows generated by $D$ and $F$ on $G$ cannot be algebraically conjugate.
\end{example}

%-------
%-------

\section{Linear Flows on the Heisenberg Group}

In this section we illustrate the preceding results in the three-dimensional Heisenberg Lie algebra. The purpose is twofold. First, we describe explicitly the action of the automorphism group on the derivation algebra. Second, we show how the projectability condition restricts the admissible derivations when one passes from the simply connected Heisenberg group to a quotient by a discrete central
subgroup.

Let $\mathfrak h_3$ be the Lie algebra with basis $\{X,Y,Z\}$ and Lie bracket determined by
\[
[X,Y]=Z,\qquad [X,Z]=[Y,Z]=0.
\]
Thus $Z(\mathfrak h_3)=[\mathfrak h_3,\mathfrak h_3]=\operatorname{span}\{Z\}$. 

We first recall the explicit form of the derivations of $\mathfrak h_3$. Every derivation $D\in\operatorname{Der}(\mathfrak h_3)$ is represented, with respect to the basis $\{X,Y,Z\}$, by a matrix of the form
\[
  D=
  \begin{pmatrix}
    a & b & 0\\
    d & e & 0\\
    c & f & a+e
  \end{pmatrix},
  \qquad a,b,c,d,e,f\in\mathbb R.
  \]
Conversely, every matrix of this form defines a derivation of $\mathfrak h_3$.  In particular, $\dim\operatorname{Der}(\mathfrak h_3)=6$.

We shall write such a derivation in block form as
\[
  D=
  \begin{pmatrix}
  A & 0\\
  w & \tau
  \end{pmatrix},
\]
where
\[
  A=
  \begin{pmatrix}
  a & b\\
  d & e
  \end{pmatrix},
  \qquad
  w=\begin{pmatrix}c & f\end{pmatrix},
  \qquad
  \tau=\operatorname{tr}(A)=a+e.
\]
If $\lambda_1,\lambda_2$ are the eigenvalues of $A$, then the eigenvalues of  $D$ are $\lambda_1$, $\lambda_2$, $\lambda_1+\lambda_2$.

We also recall the automorphism group of $\mathfrak h_3$. Every automorphism $T\in\operatorname{Aut}(\mathfrak h_3)$ is represented, with respect to the basis $\{X,Y,Z\}$, by a matrix of the form
\[
  T=
  \begin{pmatrix}
    p & q & 0\\
    s & t & 0\\
    r & u & pt-qs
  \end{pmatrix},
  \qquad pt-qs\neq 0.
\]
Equivalently,
\[
  T=
  \begin{pmatrix}
    P & 0\\
    v & \det P
  \end{pmatrix},
  \qquad
  P\in GL(2,\mathbb R),\quad v\in\mathbb R^{1\times 2}.
\]

By the simply connected case of the conjugacy criterion, the algebraic  classification of linear flows on the connected and simply connected Heisenberg group is equivalent to the classification of the orbits of the action
\[
  \operatorname{Aut}(\mathfrak h_3)\times \operatorname{Der}(\mathfrak h_3)
  \longrightarrow
  \operatorname{Der}(\mathfrak h_3), \quad (T,D)\longmapsto TDT^{-1}.
\]

Let
\[
  D=
  \begin{pmatrix}
    A & 0\\
    w & \tau
  \end{pmatrix}
  \in\operatorname{Der}(\mathfrak h_3), \quad \tau=\operatorname{tr}(A),
\]
and let
\[
  T=
  \begin{pmatrix}
    P & 0\\
    v & \det P
  \end{pmatrix}
  \in\operatorname{Aut}(\mathfrak h_3).
\]
A direct computation gives
\[
  TDT^{-1}
  =
  \begin{pmatrix}
    PAP^{-1} & 0\\
    w' & \tau
  \end{pmatrix},
\]
where
\[
  w' = \big((\det P)w+v(A-\tau I)\big)P^{-1}.
\]
Therefore the horizontal block transforms by similarity,
\[
  A\longmapsto PAP^{-1},
\]
whereas the central row transforms according to
\[
  w\longmapsto  \big((\det P)w+v(A-\tau I)\big)P^{-1}.
\]
The following consequence is useful for reducing the derivation.

\begin{proposition}\label{prop:eliminate_w_h3}
  Let
  \[
    D=
    \begin{pmatrix}
      A & 0\\
      w & \tau
    \end{pmatrix}
    \in\operatorname{Der}(\mathfrak h_3), \quad \tau=\operatorname{tr}(A).
  \]
  If $\det(A-\tau I)\neq 0$, then $D$ is conjugate, by an automorphism of $\mathfrak h_3$, to a derivation of the form
  \[
    \widetilde D=
    \begin{pmatrix}
      A & 0\\
      0 & \tau
  \end{pmatrix}.
  \]
\end{proposition}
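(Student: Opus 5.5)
We plan to use the explicit transformation formula for the action of $\operatorname{Aut}(\mathfrak h_3)$ on $\operatorname{Der}(\mathfrak h_3)$ derived just above. The idea is to choose an automorphism $T$ whose horizontal block is the identity, so that $A$ is left unchanged, and to select its central row $v$ so that the new central row $w'$ is zero.

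Concretely, take
\[
  T=
  \begin{pmatrix}
    I & 0\\
    v & 1
  \end{pmatrix},
  \qquad v\in\mathbb R^{1\times 2}.
\]
This is an automorphism of $\mathfrak h_3$, since $P=I$ gives $\det P=1\neq 0$. By the computation preceding the statement,
\[
  TDT^{-1}=
  \begin{pmatrix}
    A & 0\\
    w' & \tau
  \end{pmatrix},
  \qquad
  w'=w+v(A-\tau I).
\]
We therefore need $v(A-\tau I)=-w$. Because $\det(A-\tau I)\neq 0$, the matrix $A-\tau I$ is invertible, and we may take
\[
  v=-w(A-\tau I)^{-1}.
\]
With this choice $w'=0$, so $TDT^{-1}=\widetilde D$. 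Equivalently, $T D=\widetilde D T$ with $T\in\operatorname{Aut}(\mathfrak h_3)$, which is the claimed conjugacy.

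For completeness, one can check the formula for $w'$ in this special case by direct block multiplication. Here
\[
  T^{-1}=\begin{pmatrix} I & 0\\ -v & 1\end{pmatrix},
\]
and the lower-left block of $TDT^{-1}$ is $(vA+w)-\tau v=w+v(A-\tau I)$. This verification is routine, and since the general formula is already stated in the paper, no real difficulty arises. The only substantive point is the invertibility of $A-\tau I$, which is exactly the hypothesis. Note that $\det(A-\tau I)=\det\begin{pmatrix}-e&b\\ d&-a\end{pmatrix}=ae-bd=\det A$, so the hypothesis is equivalent to $A$ being invertible.
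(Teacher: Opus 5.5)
Your proof is correct and is the same argument as the paper's: take $P=I$ and $v=-w(A-\tau I)^{-1}$ so that the transformed central row $w+v(A-\tau I)$ vanishes. Your direct block-multiplication check of the lower-left entry, and your observation that $\det(A-\tau I)=\det A$ (which the paper records right after its proof), are both accurate.
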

\begin{proof}
Taking $P=I$, the transformation formula for $w$ becomes
\[
w\longmapsto w+v(A-\tau I).
\]
If $A-\tau I$ is invertible, choose
\[
v=-w(A-\tau I)^{-1}.
\]
Then the transformed central row is zero.
\end{proof}

Since $\tau=\operatorname{tr}(A)$, for a $2\times 2$ matrix $A$ one has
\[
\det(A-\tau I)=\det A.
\]
Thus the obstruction to eliminating the central row $w$ occurs precisely when
the horizontal block $A$ is singular.

We now record reduced representatives for the action of $\operatorname{Aut}(\mathfrak h_3)$ on $\operatorname{Der}(\mathfrak h_3)$. These representatives should be understood as reduced forms; in the case $\det A\neq 0$, the remaining classification is the usual similarity classification of the $2\times 2$ matrix $A$.

\begin{theorem}\label{thm:reduced_forms_h3}
  Every derivation of $\mathfrak h_3$ is conjugate, by an automorphism of $\mathfrak h_3$, to one of the following reduced forms.
  \begin{enumerate}
    \item If $\det A\neq 0$, then
    \[
      D\sim
      \begin{pmatrix}
        A & 0\\
        0 & \operatorname{tr}(A)
      \end{pmatrix},
    \]
    where $A$ is taken up to similarity in $GL(2,\mathbb R)$.
    \item If $\det A=0$, $A\neq 0$, and $\tau=\operatorname{tr}(A)\neq 0$, then
    \[
      D\sim D_{\tau,\varepsilon}  =
      \begin{pmatrix}
        \tau & 0 & 0\\
        0 & 0 & 0\\
        \varepsilon & 0 & \tau
      \end{pmatrix}, \quad \varepsilon\in\{0,1\}.
    \]
    \item If $\det A=0$, $A\neq 0$, and $\tau=\operatorname{tr}(A)=0$, then
    \[
      D\sim D_{\mathrm{nil},\varepsilon}  
      =
      \begin{pmatrix}
        0 & 1 & 0\\
        0 & 0 & 0\\
        \varepsilon & 0 & 0
      \end{pmatrix}, \quad \varepsilon\in\{0,1\}.
    \]
    \item If $A=0$, then
    \[
      D\sim 0 \quad \text{or} 
      \quad 
      D\sim D_{\mathrm{cent}}
      =
      \begin{pmatrix}
        0 & 0 & 0\\
        0 & 0 & 0\\
        1 & 0 & 0
      \end{pmatrix}.
    \]
  \end{enumerate}
\end{theorem}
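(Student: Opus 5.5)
The plan is to work entirely with the transformation formulas derived just before Proposition~\ref{prop:eliminate_w_h3}. Conjugating by
\[
T=\begin{pmatrix} P & 0\\ v & \det P\end{pmatrix}
\]
sends $A\mapsto PAP^{-1}$ and $w\mapsto \big((\det P)w+v(A-\tau I)\big)P^{-1}$, and it leaves $\tau$ unchanged. In each case I will proceed in two stages. First, I choose $P$ to put $A$ into a real normal form under similarity. Second, I use the remaining freedom to simplify $w$: the vector $v$, together with those $P$ in the stabilizer of that normal form, i.e.\ $PA_0P^{-1}=A_0$. Since conjugations compose, I may apply these steps successively.

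\emph{Case $\det A\neq0$.} Since $\det(A-\tau I)=\det A\neq 0$, Proposition~\ref{prop:eliminate_w_h3} already kills $w$. A further conjugation by $T$ with $v=0$ keeps $w=0$ and replaces $A$ by any $PAP^{-1}$. Hence $A$ can be taken up to similarity.

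\emph{Case $\det A=0$, $A\neq0$, $\tau\neq0$.} The eigenvalues of $A$ are $\tau$ and $0$, which are distinct. So $A$ is diagonalizable over $\mathbb R$, and I conjugate it to $\operatorname{diag}(\tau,0)$. Then $A-\tau I=\operatorname{diag}(0,-\tau)$, so $v(A-\tau I)=(0,-\tau v_2)$. Taking $P=I$ and $v_2=f/\tau$ kills the second entry of $w$, leaving $w=(c,0)$. The stabilizer of $\operatorname{diag}(\tau,0)$ contains $P=\operatorname{diag}(\alpha,\beta)$, and with $v=0$ this gives $w'=(\alpha\beta)(c,0)P^{-1}=(\beta c,0)$. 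If $c\neq0$, I take $\beta=1/c$ to get $\varepsilon=1$; otherwise $\varepsilon=0$. This yields $D_{\tau,\varepsilon}$.

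\emph{Case $\det A=0$, $A\neq0$, $\tau=0$.} Here $A$ is nilpotent and nonzero, so it is similar to $N=\begin{pmatrix}0&1\\0&0\end{pmatrix}$. Now $A-\tau I=N$ and $vN=(0,v_1)$, so with $P=I$ I choose $v_1=-f$ to kill the second entry, obtaining $w=(c,0)$. The stabilizer of $N$ contains $P=\begin{pmatrix}\alpha&\beta\\0&\alpha\end{pmatrix}$, with $\det P=\alpha^2$. A short computation with $P^{-1}$ shows that the first entry of $\alpha^2 (c,0)P^{-1}$ is $\alpha c$. The second entry may become nonzero, but it can then be removed again by a choice of $v_1$, since that step does not affect the first entry. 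Choosing $\alpha=1/c$ when $c\neq0$ gives $D_{\mathrm{nil},\varepsilon}$.

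\emph{Case $A=0$.} Then $\tau=0$ and $w'=(\det P)\,wP^{-1}$. If $w=0$, then $D=0$. Otherwise, since $GL(2,\mathbb R)$ acts transitively on nonzero row vectors, I first pick $P_0$ with $wP_0^{-1}=(1,0)$. I then rescale with $P=\operatorname{diag}(\lambda,1)$, which sends $(1,0)\mapsto \lambda\cdot(1/\lambda,0)=(1,0)$, so this is consistent, and I conclude that $D\sim D_{\mathrm{cent}}$.

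The main obstacle is the bookkeeping in the second stage of the two singular cases. There one must compute how the stabilizer of the normal form of $A$ acts on the residual entry of $w$, keeping track of the factor $\det P$ and of the row-vector convention $wP^{-1}$. One must also verify that the second step (normalizing $c$) does not reintroduce a component that cannot be cancelled by $v$. I would handle this by doing the $v$-elimination last, after fixing $P$.
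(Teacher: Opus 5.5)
Your proposal follows the paper's proof. It uses the same four-way case split, first brings $A$ to a real normal form by similarity, and then simplifies $w$ using $v$ together with the stabilizer of that normal form. Your explicit computations in the two singular cases are correct:
\begin{itemize}
\item $w'=(\beta c,0)$ for $P=\operatorname{diag}(\alpha,\beta)$;
\item first entry $\alpha c$ for $P=\alpha I+\beta N$.
\end{itemize}
These supply details the paper only asserts.

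One step does not work as written, in the case $A=0$. If you choose $P_0$ with $wP_0^{-1}=(1,0)$, the transformed row is $(\det P_0)\,wP_0^{-1}=(\det P_0,0)$, not $(1,0)$. Your proposed rescaling $P=\operatorname{diag}(\lambda,1)$ cannot repair this. Under $x\mapsto(\det P)\,xP^{-1}$ it fixes $(1,0)$, as your own computation shows, and hence it fixes every row $(x,0)$. So the stray factor $\det P_0$ is never removed.

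Any of the following fixes works:
\begin{itemize}
\item Follow with $P=\operatorname{diag}(1,\mu)$, which sends $(x,0)\mapsto(\mu x,0)$, and take $\mu=1/\det P_0$.
\item Choose $P_0\in SL(2,\mathbb R)$ from the start. This is possible because $SL(2,\mathbb R)$ is still transitive on nonzero row vectors: complete $w$ to a determinant-one matrix with first row $w$.
\item Observe that $(\det P)P^{-1}=\operatorname{adj}(P)$, and that $\operatorname{adj}$ is a bijection of $GL(2,\mathbb R)$, so the twisted action is transitive on nonzero rows.
\end{itemize}
With this correction your argument is complete. The paper itself states this case without justification.
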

 \begin{proof}
  If $\det A\neq 0$, then $A-\tau I$ is invertible. By  Proposition \ref{prop:eliminate_w_h3}, the central row $w$ can be eliminated. Thus the classification reduces to the similarity classification of the horizontal  block $A$.

  Assume now that $\det A=0$, $A\neq 0$, and $\tau=\operatorname{tr}(A)\neq 0$. Then $A$ has eigenvalues $\tau$ and $0$, and is similar to
  \[
    \begin{pmatrix}
      \tau & 0\\
      0 & 0
    \end{pmatrix}.
  \]
  In this basis,
  \[
    A-\tau I=
    \begin{pmatrix}
      0 & 0\\
      0 & -\tau
    \end{pmatrix}.
  \]
  The term $v(A-\tau I)$ allows one component of $w$ to be eliminated. The  remaining component is either zero or nonzero. Automorphisms preserving the diagonal form of $A$ allow any nonzero remaining component to be normalized to  $1$. This gives the forms $D_{\tau,\varepsilon}$.

  If $\det A=0$, $A\neq 0$, and $\tau=0$, then $A$ is a nonzero nilpotent matrix. Hence it is similar to
  \[
    \begin{pmatrix}
      0 & 1\\
      0 & 0
    \end{pmatrix}.
  \]
  The transformation formula for $w$ again allows one component to be eliminated, and the remaining component is either zero or can be normalized to $1$. This  gives the forms $D_{\mathrm{nil},\varepsilon}$.

  Finally, if $A=0$, then $\tau=0$ and
  \[
    D=
    \begin{pmatrix}
      0 & 0\\
      w & 0
    \end{pmatrix}.
  \]
  The action on $w$ reduces to $w\longmapsto (\det P)wP^{-1}$. There are exactly two possibilities: $w=0$ and $w\neq 0$. These correspond, respectively, to $D=0$ and to $D_{\mathrm{cent}}$.
\end{proof}

Thus, on the connected and simply connected Heisenberg group, every derivation of $\mathfrak h_3$ generates a linear flow, and the algebraic conjugacy classes of such flows are determined by the orbits of $\operatorname{Aut}(\mathfrak h_3)$
on $\operatorname{Der}(\mathfrak h_3)$.

\subsection{Projectable derivations on central quotients}

We now show how the classification changes when one passes to a non-simply connected quotient.

Let $\widetilde H_3$ be the connected and simply connected Heisenberg group with Lie algebra $\mathfrak h_3$. Since $\widetilde H_3$ is nilpotent and simply connected, the exponential map is a diffeomorphism. The center of
$\widetilde H_3$ is $Z(\widetilde H_3)=\exp(\mathbb R Z)$. For $a>0$, consider the discrete central subgroup
\[
  \Gamma_a=\{\exp(naZ);\, n\in\mathbb Z\}.
\]
Set $H_{3,a}:=\widetilde H_3/\Gamma_a$.

\begin{proposition}\label{prop:projectable_derivations_h3_quotient}
  Let
  \[
    D=
    \begin{pmatrix}
      A & 0\\
      w & \tau
    \end{pmatrix}
    \in\operatorname{Der}(\mathfrak h_3), \quad \tau=\operatorname{tr}(A).
  \]
  Then $D$ defines a linear flow on $H_{3,a}=\widetilde H_3/\Gamma_a$ if and only if $\tau=0$. Equivalently,
  \[
    \operatorname{Der}(\mathfrak h_3;\Gamma_a)  =
    \left\{
    \begin{pmatrix}
      A & 0\\
      w & 0
    \end{pmatrix}
    \in\operatorname{Der}(\mathfrak h_3); \operatorname{tr}(A)=0  \right\}.
  \]
\end{proposition}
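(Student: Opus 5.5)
By Proposition \ref{prop:projectable_derivations}, $D$ defines a linear flow on $H_{3,a}$ if and only if $\Gamma_a\subset\operatorname{Fix}(\widetilde\varphi^D)$. Since $\Gamma_a$ is generated by $\exp(aZ)$ and each $\widetilde\varphi_t^D$ is a homomorphism, it suffices to test the single condition
\[
\widetilde\varphi_t^D(\exp(aZ))=\exp(aZ)\quad\text{for all } t\in\mathbb R .
\]
The plan is therefore to compute $\widetilde\varphi_t^D$ on the central one-parameter subgroup $\exp(\mathbb R Z)$ explicitly, and then read off the condition on $D$.

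\textbf{Step 1: the flow on the centre.} For any Lie group automorphism $\phi$ one has $\phi(\exp Y)=\exp(d\phi_e Y)$. With $d(\widetilde\varphi_t^D)_{\widetilde e}=e^{tD}$ this gives
\[
\widetilde\varphi_t^D(\exp(aZ))=\exp(a\,e^{tD}Z).
\]
From the block form of $D$, the third column of $D$ is $(0,0,\tau)^T$, so $DZ=\tau Z$. Hence $Z$ is an eigenvector and $e^{tD}Z=e^{t\tau}Z$. Consequently
\[
\widetilde\varphi_t^D(\exp(aZ))=\exp(ae^{t\tau}Z).
\]

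\textbf{Step 2: compare the two exponentials.} Since $\widetilde H_3$ is simply connected and nilpotent, $\exp$ is a diffeomorphism, and in particular injective. So the fixed point condition $\exp(ae^{t\tau}Z)=\exp(aZ)$ is equivalent to $ae^{t\tau}=a$ for all $t$. Because $a>0$, this holds if and only if $\tau=0$. If $\tau=0$, the condition holds for all $t$, and then $\exp(naZ)=\exp(aZ)^n$ is also fixed, since fixed point sets of automorphisms are subgroups. Conversely, if $\tau\neq0$, the generator is not fixed, so the inclusion fails.

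\textbf{Step 3: conclude.} Steps 1 and 2 give $\operatorname{Der}(\mathfrak h_3;\Gamma_a)=\{D:\operatorname{tr}(A)=0\}$. The row $w$ is unconstrained, because it never enters the action of $D$ on $Z$.

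\textbf{Main obstacle.} There is little real difficulty. The only care needed is in Step 1: the identity $\phi\circ\exp=\exp\circ\, d\phi_e$ must be used, rather than some formula in coordinates, and the injectivity of $\exp$ on $\widetilde H_3$ is exactly what is needed in Step 2. An alternative to Step 2 is to differentiate in $t$ as in Example \ref{ex:projectable_derivations_cylinder}. Since $\Gamma_a$ is discrete, the map $t\mapsto\widetilde\varphi_t^D(\exp(aZ))$ is constant as soon as it stays in $\Gamma_a$, so the condition reduces to $D(aZ)=0$, that is $\tau=0$. Either route works.
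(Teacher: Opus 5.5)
Your proposal is correct and follows essentially the same route as the paper's proof. You reduce via Proposition~\ref{prop:projectable_derivations} to the generator $\exp(aZ)$, use $\widetilde\varphi_t^D(\exp U)=\exp(e^{tD}U)$ with $DZ=\tau Z$, and conclude from injectivity of $\exp$ on $\mathbb R Z$ and $a>0$ that $\tau=0$. Your explicit remark that fixing the generator suffices because $\operatorname{Fix}(\widetilde\varphi^D)$ is a subgroup is a small detail the paper leaves implicit.
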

\begin{proof}
  Let $(\widetilde\varphi_t^D)_{t\in\mathbb R}$ be the linear flow on $\widetilde H_3$ generated by $D$. Since $\widetilde H_3$ is simply connected and nilpotent,
  \[
    \widetilde\varphi_t^D(\exp U)=\exp(e^{tD}U),  \qquad U\in\mathfrak h_3.
  \]
  Moreover, $D(Z)=\tau Z$. Hence $e^{tD}Z=e^{t\tau}Z$. For the generator $\exp(aZ)$ of $\Gamma_a$, we obtain
  \[
    \widetilde\varphi_t^D(\exp(aZ)) = \exp(ae^{t\tau}Z).
  \]
  The flow descends to $H_{3,a}$ if and only if $\Gamma_a\subset \operatorname{Fix}(\widetilde\varphi^D)$. This is equivalent to
  \[
    \exp(ae^{t\tau}Z)=\exp(aZ)
  \]
  for every $t\in\mathbb R$. Since the exponential map is injective on  $\mathbb R Z$, this is equivalent to $ae^{t\tau}=a$ for every $t\in\mathbb R$. Since $a>0$, this holds if and only if $\tau=0$. Therefore $D$ is projectable precisely when $\operatorname{tr}(A)=0$.
\end{proof}

This example shows explicitly that
\[
  \operatorname{Der}(\mathfrak h_3;\Gamma_a)  \subsetneq  \operatorname{Der}(\mathfrak h_3).
\]
Thus, although every derivation of $\mathfrak h_3$ generates a linear flow on the simply connected group $\widetilde H_3$, only the derivations satisfying $\operatorname{tr}(A)=0$ generate linear flows on the quotient $H_{3,a}$.

The admissible automorphisms are also restricted.

\begin{proposition}\label{prop:admissible_automorphisms_h3_quotient}
  Let
  \[
    T=
    \begin{pmatrix}
      P & 0\\
      v & \det P
    \end{pmatrix}
    \in\operatorname{Aut}(\mathfrak h_3).
  \]
  Then $T$ is admissible for the quotient $H_{3,a}$, that is, $\widetilde T(\Gamma_a)=\Gamma_a$, if and only if $\det P=\pm 1$. Consequently,
  \[
    \operatorname{Aut}(\mathfrak h_3;\Gamma_a)
    =
    \left\{
    \begin{pmatrix}
      P & 0\\
      v & \det P
    \end{pmatrix}
    \in\operatorname{Aut}(\mathfrak h_3);
    \det P=\pm 1
    \right\}.
  \]
\end{proposition}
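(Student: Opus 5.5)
The plan is to compute $\widetilde T$ explicitly on the center, exactly as in the proof of Proposition \ref{prop:projectable_derivations_h3_quotient}. Since $\widetilde H_3$ is connected, simply connected and nilpotent, the exponential map is a diffeomorphism. The unique Lie group isomorphism $\widetilde T:\widetilde H_3\to\widetilde H_3$ with $d\widetilde T_{\widetilde e}=T$ therefore satisfies
\[
  \widetilde T(\exp U)=\exp(TU),\qquad U\in\mathfrak h_3 .
\]
I will take this naturality of $\exp$ under homomorphisms as standard.

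The first step is to read off $TZ$ from the matrix form. The third column of $T$ is $(0,0,\det P)^{T}$, so $TZ=(\det P)Z$. Hence
\[
  \widetilde T(\exp(naZ))=\exp(na\det P\,Z),\qquad n\in\mathbb Z,
\]
and so $\widetilde T(\Gamma_a)=\{\exp(n a\det P\,Z);\ n\in\mathbb Z\}$. Because $\exp$ restricted to $\mathbb RZ$ is an injective group homomorphism $\mathbb R\to Z(\widetilde H_3)$, the condition $\widetilde T(\Gamma_a)=\Gamma_a$ becomes a statement about subgroups of $\mathbb R$:
\[
  (a\det P)\mathbb Z = a\mathbb Z .
\]

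The second step is the elementary equivalence. Since $a>0$, two cyclic subgroups $c\mathbb Z$ and $a\mathbb Z$ of $\mathbb R$ coincide if and only if $c=\pm a$. Here that means $\det P=\pm1$.

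\begin{itemize}
  \item For necessity: $a\in (a\det P)\mathbb Z$ gives $1=m\det P$ for some integer $m$. Also $a\det P\in a\mathbb Z$ gives $\det P\in\mathbb Z$. Two integers whose product is $1$ are both $\pm1$, so $\det P=\pm1$.
  \item For sufficiency: if $\det P=\pm1$, then $(\pm a)\mathbb Z=a\mathbb Z$.
\end{itemize}

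The description of $\operatorname{Aut}(\mathfrak h_3;\Gamma_a)$ then follows directly from the parametrization of $\operatorname{Aut}(\mathfrak h_3)$ recalled earlier. The vector $v$ is unconstrained, since it does not affect the action on $Z$.

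There is no real obstacle. The only point needing care is justifying $\widetilde T\circ\exp=\exp\circ T$. For this I would use that $s\mapsto\widetilde T(\exp sU)$ is a one-parameter subgroup with initial velocity $TU$, so it equals $s\mapsto\exp(sTU)$.
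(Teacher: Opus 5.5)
Your proof is correct and follows the same route as the paper: read off $TZ=(\det P)Z$, transport this to $\widetilde T(\exp(naZ))=\exp(na\det P\,Z)$, and reduce admissibility to $(a\det P)\mathbb Z=a\mathbb Z$, which holds if and only if $\det P=\pm1$. You also supply two details the paper leaves implicit: the naturality $\widetilde T\circ\exp=\exp\circ T$, and the elementary integer argument for the lattice equality.
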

\begin{proof}
  The automorphism $T$ satisfies $T(Z)=(\det P)Z$. Therefore the corresponding automorphism $\widetilde T$ of $\widetilde H_3$ satisfies
  \[
    \widetilde T(\exp(aZ))=\exp(a(\det P)Z).
  \]
  Thus $\widetilde T(\Gamma_a)=\Gamma_a$ if and only if multiplication by $\det P$ maps the lattice $a\mathbb Z$ onto  itself. This happens if and only if $\det P=\pm 1$.
\end{proof}

Therefore, on the quotient $H_{3,a}$, the algebraic classification of linear flows is governed by the restricted action
\[
  \operatorname{Aut}(\mathfrak h_3;\Gamma_a)  \times  \operatorname{Der}(\mathfrak h_3;\Gamma_a)  \longrightarrow \operatorname{Der}(\mathfrak h_3;\Gamma_a),
\]
and not by the full action of $\operatorname{Aut}(\mathfrak h_3)$ on $\operatorname{Der}(\mathfrak h_3)$.

We now describe the effect of the projectability condition on the reduced representatives. For a derivation $D \in \operatorname{Der}(\mathfrak h_3;\Gamma_a)$ and a a isomorphism $T \in \operatorname{Aut}(\mathfrak h_3;\Gamma_a)$ we see that 
\[
  TDT^{-1}  =
  \begin{pmatrix}
    PAP^{-1} & 0\\
    w' & 0
  \end{pmatrix},
  \]
where $w' = \big((\det P)w+vA\big)P^{-1}$ with $\det P=\pm 1$. 

The next result shows that the quotient classification is not obtained merely by imposing $\operatorname{tr}(A)=0$ on the simply connected classification. Since the admissible automorphisms satisfy $\det P=\pm1$, some normalizations available in the simply connected case are no longer possible. In particular, the nilpotent case contains a continuous invariant.

\begin{theorem}\label{thm:reduced_forms_h3_quotient}
  Every projectable derivation $D\in\operatorname{Der}(\mathfrak h_3;\Gamma_a)$ is conjugate, by an admissible automorphism in $\operatorname{Aut}(\mathfrak h_3;\Gamma_a)$, to one of the following reduced forms.
  \begin{enumerate}
    \item If $A$ is invertible and $\det A<0$, then
    \[
      D\sim D_{\lambda}^{\mathrm{hyp}}
      =
      \begin{pmatrix}
        \lambda & 0 & 0\\
        0 & -\lambda & 0\\
        0 & 0 & 0
      \end{pmatrix},
      \qquad \lambda>0.
    \]
    \item If $A$ is invertible and $\det A>0$, then
    \[
      D\sim D_{\beta}^{\mathrm{ell}}
      =
      \begin{pmatrix}
        0 & -\beta & 0\\
        \beta & 0 & 0\\
        0 & 0 & 0
     \end{pmatrix},
     \qquad \beta>0.
    \]
    \item If $A\neq 0$ and $A$ is nilpotent, then
    \[
      D\sim D_{\mathrm{nil},\rho}
      =
      \begin{pmatrix}
        0 & 1 & 0\\
        0 & 0 & 0\\
        \rho & 0 & 0
      \end{pmatrix},
      \qquad \rho\geq 0.
    \]
    \item If $A=0$, then
    \[
      D\sim 0 \quad \text{or} \quad D\sim D_{\mathrm{cent}}
      =
      \begin{pmatrix}
        0 & 0 & 0\\
        0 & 0 & 0\\
        1 & 0 & 0
      \end{pmatrix}.
    \]
  \end{enumerate}
\end{theorem}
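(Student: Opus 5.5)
Since $D\in\operatorname{Der}(\mathfrak h_3;\Gamma_a)$, Proposition~\ref{prop:projectable_derivations_h3_quotient} gives $\tau=\operatorname{tr}(A)=0$. I would therefore work throughout with the restricted transformation rule recorded just before the statement:
\[
  A\longmapsto PAP^{-1},\qquad w\longmapsto \big((\det P)w+vA\big)P^{-1},\qquad \det P=\pm1,
\]
where $P$ is the horizontal block and $v$ the central row of the admissible automorphism $T$.

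The cases are split by $\det A$, since for traceless $A$ the characteristic polynomial is $\lambda^2+\det A$.

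\textbf{Reducing $A$ by admissible $P$.} The first observation is that every $GL(2,\mathbb R)$-similarity can be realized with $\det P=\pm1$. Indeed, replacing $P$ by $cP$ with $c=|\det P|^{-1/2}$ does not change $PAP^{-1}$, and gives $\det(cP)=\pm1$. So $A$ may be put in real Jordan form using admissible automorphisms.

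\textbf{Case $\det A<0$.} The eigenvalues are $\pm\lambda$ with $\lambda>0$, so $A\sim\operatorname{diag}(\lambda,-\lambda)$.

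\textbf{Case $\det A>0$.} The eigenvalues are $\pm i\beta$ with $\beta>0$, so $A\sim\begin{pmatrix}0&-\beta\\ \beta&0\end{pmatrix}$. The sign of $\beta$ can be fixed positive by conjugating with $\operatorname{diag}(1,-1)$, which is admissible since its determinant is $-1$.

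\textbf{Eliminating $w$ when $A$ is invertible.} After fixing $A$, I would take $P=I$ (so $\det P=1$ and $T$ is admissible). The rule becomes $w\mapsto w+vA$, and choosing $v=-wA^{-1}$ kills $w$. This is exactly Proposition~\ref{prop:eliminate_w_h3}, noting that $A-\tau I=A$ here and that $P=I$ is admissible. This yields $D_\lambda^{\mathrm{hyp}}$ and $D_\beta^{\mathrm{ell}}$.

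\textbf{Nilpotent case.} This is where the restriction $\det P=\pm1$ matters, and it is the step needing the most care. First conjugate so that $A=\begin{pmatrix}0&1\\0&0\end{pmatrix}$ (admissibly, by the first step), and write $w=(c,f)$.

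With $P=I$ we get $vA=(0,v_1)$, so $f$ can be eliminated while $c$ is untouched. To normalize $c$, I would compute the stabilizer of $A$ among admissible $P$. The condition $PA=AP$ forces $P=\begin{pmatrix}p&q\\0&p\end{pmatrix}$, and then $\det P=p^2=1$ gives $p=\pm1$.

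For such $P$, a direct check shows the first entry of $\big((\det P)w+vA\big)P^{-1}$ is $c/p=\pm c$, because $vA$ has zero first entry. So $c$ can only be changed in sign, never rescaled. Taking $p=-1$ if necessary, followed by $P=I$ with a suitable $v$ to re-kill the second entry, gives $\rho=|c|\ge0$ and hence $D_{\mathrm{nil},\rho}$. In the simply connected case, scaling $P$ would have normalized $c$ to $1$; that is no longer available. The statement only asks for reduction to these forms, so I would not need to prove that $\rho$ is a genuine invariant, though the stabilizer computation essentially shows it.

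\textbf{Case $A=0$.} The rule becomes $w\mapsto(\det P)wP^{-1}$. If $w\neq0$, choose $P\in SL(2,\mathbb R)$ with $wP^{-1}=(1,0)$, which is possible since $SL(2,\mathbb R)$ acts transitively on nonzero row vectors. This gives $D_{\mathrm{cent}}$, and otherwise $D=0$.
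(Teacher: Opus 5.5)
Your proposal is correct and follows the paper's proof essentially step by step. Both arguments use the restricted rule $w\mapsto((\det P)w+vA)P^{-1}$ with $\det P=\pm1$, rescale conjugating matrices to make them admissible, kill $w$ by $P=I$, $v=-wA^{-1}$ when $A$ is invertible, use the stabilizer $\begin{pmatrix}\varepsilon&q\\0&\varepsilon\end{pmatrix}$, $\varepsilon=\pm1$, of the nilpotent block to see that the central entry can only change sign, and handle $A=0$ via $w\mapsto(\det P)wP^{-1}$. The differences are cosmetic: you normalize $A$ before eliminating $w$, and you spell out the sign flip of $\beta$ by $\operatorname{diag}(1,-1)$ and the transitivity of $SL(2,\mathbb R)$ on nonzero row vectors, details the paper leaves implicit.
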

\begin{proof}
  Let
  \[
    D=
    \begin{pmatrix}
      A & 0\\
      w & 0
    \end{pmatrix}
    \in\operatorname{Der}(\mathfrak h_3;\Gamma_a),  \qquad A\in\mathfrak{sl}(2,\mathbb R).
  \]
  Assume first that $A$ is invertible. Since
  \[
    w'  = \big((\det P)w+vA\big)P^{-1},
  \]
  we may take $P=I$ and choose $v=-wA^{-1}$.Then the transformed central row is zero. Hence $D$ is conjugate to
  \[
    \begin{pmatrix}
      A & 0\\
      0 & 0
    \end{pmatrix}.
  \]
  Since $A\in\mathfrak{sl}(2,\mathbb R)$, its characteristic polynomial is $\lambda^2+\det A$. If $\det A<0$, then $A$ has real eigenvalues $\lambda$ and $-\lambda$, with $\lambda=\sqrt{-\det A}>0$, and $A$ is similar to
  \[
    \begin{pmatrix}
      \lambda & 0\\
      0 & -\lambda
    \end{pmatrix}.
  \]
  If $\det A>0$, then $A$ has purely imaginary eigenvalues $\pm i\beta$, $\beta=\sqrt{\det A}>0$, and $A$ is similar over $\mathbb R$ to
  \[
    \begin{pmatrix}
      0 & -\beta\\
      \beta & 0
    \end{pmatrix}.
  \]
  The similarity may be realized by a matrix $P$ with $\det P=\pm 1$, since multiplying a conjugating matrix by a nonzero scalar does not change the  conjugation and allows its determinant to be normalized to $\pm 1$. This gives  the first two reduced forms.

  Assume now that $A\neq 0$ is nilpotent. Again, after conjugation by an admissible matrix $P$, we may suppose that
  \[
    A=  N:=
    \begin{pmatrix}
      0 & 1\\
      0 & 0
    \end{pmatrix}.
  \]
  Write $w=(\alpha,\beta)$. Since $vN=(0,v_1)$ for $v=(v_1,v_2)$, the term $vN$ allows us to eliminate the second component of $w$. Thus $D$ is conjugate to a derivation with $A=N$, $w=(\alpha,0)$. The stabilizer of $N$ inside the admissible group consists of matrices
  \[
    P=
    \begin{pmatrix}
      \varepsilon & q\\
      0 & \varepsilon
    \end{pmatrix},
    \qquad
    \varepsilon=\pm 1,\quad q\in\mathbb R.
  \]
  Using again the freedom given by $vN$, this stabilizer changes $\alpha$ only  by sign. Therefore the remaining invariant is $    \rho=|\alpha|\geq 0$. This gives the family
  \[
    D_{\mathrm{nil},\rho} =
    \begin{pmatrix}
      0 & 1 & 0\\
      0 & 0 & 0\\
      \rho & 0 & 0
    \end{pmatrix},
    \qquad \rho\geq 0.
  \]

  Finally, suppose that $A=0$. Then
  \[
    D=
    \begin{pmatrix}
      0 & 0\\
      w & 0
    \end{pmatrix}.
  \]
  The conjugation formula reduces to $w\longmapsto (\det P)wP^{-1}$, $\det P=\pm 1$. If $w=0$, then $D=0$. If $w\neq 0$, one can choose $P$ with $\det P=\pm 1$ such that
  \[
    (\det P)wP^{-1}=(1,0).
  \]
  Hence every nonzero central row is conjugate to
  \[
    D_{\mathrm{cent}} =
    \begin{pmatrix}
      0 & 0 & 0\\
      0 & 0 & 0\\
      1 & 0 & 0
    \end{pmatrix}.
  \]
  The result follows.
\end{proof}

Theorem \ref{thm:reduced_forms_h3_quotient} shows that the quotient case is not obtained merely by repeating the simply connected classification. The projectability condition first imposes
\[
  \operatorname{tr}(A)=0,
\]
and the admissibility condition restricts the conjugating automorphisms to those  with $\det P=\pm 1$. Consequently, the algebraic classification on $H_{3,a}$ is governed by the restricted action
\[
  \operatorname{Aut}(\mathfrak h_3;\Gamma_a)  \curvearrowright  \operatorname{Der}(\mathfrak h_3;\Gamma_a),
\]
rather than by the full action of
\[
  \operatorname{Aut}(\mathfrak h_3) \curvearrowright  \operatorname{Der}(\mathfrak h_3).
\]

%================
%================

\end{document}